\newtheorem{theorem}{Theorem}[section]
\newtheorem{lemma}{Lemma}[section]
\newtheorem{proposition}{Proposition}[section]
\theoremstyle{definition}
\newtheorem{remark}{Remark}[section]
\renewcommand{\hat}{\widehat}
\DeclareMathOperator{\Var}{Var}
\DeclareMathOperator{\Cov}{Cov}
\DeclareMathOperator{\Card}{Card}
\begin{document}

\title[]{On nonparametric inference for spatial regression models under domain expanding and infill asymptotics}
\thanks{
}

\author[D. Kurisu]{Daisuke Kurisu}

\date{This version: \today}


\address[D. Kurisu]{
Tokyo Institute of Technology\\
2-12-1 Ookayama, Meguro-ku, Tokyo 152-8552, Japan.
}
\email{kurisu.d.aa@m.titech.ac.jp}

\begin{abstract}
In this paper, we develop nonparametric inference on spatial regression models as an extension of \cite{LuTj14}, which develops nonparametric inference on density functions of stationary spatial processes under domain expanding and infill (DEI) asymptotics. In particular, we derive multivariate central limit theorems of mean and variance functions of nonparametric spatial regression models. Built upon those results, we propose a method to construct confidence bands for mean and variance functions. 
\medskip

\noindent
\textit{Keywords}: spatial regression model, nonparametric inference, DEI asymptotics
\end{abstract}

\maketitle

\section{Introduction}

Recently, a considerable interest has been paid on statistical inference of spatial regression models for geostatistical data analysis in some economic and scientific fields such as spatial econometrics, ecology, and seismology. 
In this paper, we consider the following spatial regression model. 
\begin{align}\label{NSR}
Y(s_{j}) = \mu(X(s_{j})) + \sigma(X(s_{j}))V(s_{j}),\ j = 1,\hdots, n,
\end{align}
where $Y(s)$ and $X(s)$ is a scalar for each $s \in \mathbb{R}^{2}$, $\{V(s_{j})\}_{j=1}^{n}$ are independent and identically distributed (i.i.d.) random variables, and $\{Y(s_{j}), X(s_{j})\}_{j = 1}^{n}$ are observations. The functions $\mu(x): \mathbb{R} \to \mathbb{R}$ and $\sigma(x): \mathbb{R} \to [0,\infty)$ are mean and conditional variance functions, respectively. 
\cite{LuTj14} proposed a scheme of domain-expanding and infill (DEI) asymptotics and studied nonaprametric density estimation for spatial data under this sampling scheme. In this paper, we are extending their results to spatial regression models in a relatively simple setting under their sampling scheme. 
As mentioned in their paper, in many applications, the DEI sampling scheme is natural because of physical constraints measurement stations cannot usually be put on a regular grid in space. Precisely, Let $\|\cdot\|$ be the Euclidean norm on $\mathbb{R}^{2}$ and define
\begin{align*}
\delta_{j,n} &= \min\{\|s_{k} - s_{j}\||1 \leq k \leq n, k \neq j\},\ \Delta_{j,n} = \max\{\|s_{k} - s_{j}\||1 \leq k \leq n, k \neq j\}.
\end{align*}
Then we assume that $\delta_{n} = \max_{1 \leq j \leq n}\delta_{n,j} \to 0,\ \Delta_{n} = \min_{1 \leq j \leq n}\Delta_{n,j} \to \infty$ as $n \to \infty$. There are some papers whose sampling schemes are related to the DEI asymptotics. \cite{HaPa94} investigated nonparametric estimation of spatial covariance function based on observations generated by a probability distribution. \cite{MaYa09} work with a similar sampling scheme and focus on the nonparametric and parametric estimation of the spectral density.

Recent contributions in the literature of statistical inference on spatial (or random field) models include \cite{JePr09, JePr12}, \cite{Ma11}, and \cite{MaVoWu13} which investigate limit theorems for the statistical inference on spatial process observed on (irregularly spaced) lattice. 
In the literature on semi-parametric spatial (or spatio-temporal) regression models, we mention \cite{GaLuTj06}, \cite{LuStTjYa09}, \cite{YaHoWeZu14} and \cite{AlJiLuZh17} as recent key references. We can also find an overview for recent developments on semi-parametric spatial models in \cite{Ro08}. 
\cite{Ro11} and \cite{Je12} study nonparametric inference on spatial regression models under a dependence structure which is different from our mixing-type conditions. They derive central limit theorems of mean functions and the former discusses an application of the results to spatial data observed on lattice. 
We also refer to \cite{HaLuTr04}, \cite{MaSt08}, \cite{HaLuYu09}, \cite{Li16} and \cite{MaSeOu18} which study nonparametric inference and estimation on mean function of spatial regression models based on random fields on lattice. However, those papers do not derive limit theorems of variance functions. 

The goal of this paper is to derive multivariate central limit theorems of the mean and variance function of the model (\ref{NSR}). From a technical point of view, we cannot use small-block and large-block argument due to \cite{Be26} which is a well known tool for nonparametric inference for regularly observed time series data since we work with DEI sampling scheme. Although we can apply the blocking argument for regularly spaced data, there is no practical guidance for constructing small and large blocks under the asymptotic framework that the distance between observations goes to $0$. Therefore, to avoid the problem, we use another approach due to \cite{Bo82} which is based on the convergence of characteristic functions of the estimators. As a result, this paper contributes to the literature on nonparametric inference for spatial regression models, and to the best of our knowledge, this is the first paper to derive limit theorems for the mean and variance functions of the model (\ref{NSR}) under the DEI asymptotics. 

The rest of the paper is organized as follows. In Section \ref{Sec2}, we give conditions to derive limit theorems given in this paper. In Section \ref{Sec3}, we give  multivariate central limit theorems of the mean and variance functions and propose a method to construct confidence bands for the estimators of those functions on finite intervals included in $\mathbb{R}$. We also propose a data-driven method for bandwidth selection and 
report simulation results to study finite sample performance of the central limit theorems and proposed confidence bands in Section \ref{Sec4}. 
All proofs are collected in Appendix A. 

\subsection{Notations}
For any non-empty set $T$ and any (complex-valued) function $f$ on $T$, let $\|f\|_{T} = \sup_{t \in T}|f(t)|$, and for $T = \mathbb{R}$, let $\|f\|_{L^{p}} = (\int_{\mathbb{R}}|f(x)|^{p}dx)^{1/p}$ for $p >0$. 

\section{Assumptions}\label{Sec2}

In this section we summarize assumptions used in the proof of limit theorems given in Section 3 for the sake of convenience. 

(A1): \textbf{Assumption on spatial process}
\begin{itemize}
\item[(i)] $\{X(s), s \in \mathbb{R}^{2}\}$ is a strictly stationary spatial process, satisfying the $\alpha$-mixing property that there exist a function $\varphi$ such that $\varphi(t) \downarrow 0$ as $t \to \infty$, and a function $\psi: \mathbb{N}^{2} \to [0,\infty)$ symmetric and increasing in each of its two arguments, such that
\begin{align}
\alpha(\mathcal{B}(\mathcal{S}'), \mathcal{B}(\mathcal{S}'')) &= \sup_{A \in \mathcal{B}(\mathcal{S}'), B \in \mathcal{B}(\mathcal{S}'')}|P(A \cap B) - P(A)P(B)| \nonumber  \\
&\leq \psi(\Card(\mathcal{S}'), \Card(\mathcal{S}''))\varphi(d(\mathcal{S}', \mathcal{S}'')), \label{MixRF}
\end{align}
where $\mathcal{S}', \mathcal{S}'' \subset \mathbb{R}^{2}$, $\mathcal{B}(\mathcal{S})$ be the Borel $\sigma$-field generated by $\{X_{s}, s \in \mathcal{S}\}$, and $d(\mathcal{S}', \mathcal{S}'') = \min\{\|s_{k} - s_{j}\|| s_{k} \in \mathcal{S}', s_{j} \in \mathcal{S}''\}$ for each $\mathcal{S}'$ and $\mathcal{S}''$.  

\item[(ii)] For some constant $\gamma>\max\{1, 2\kappa/(2 + \kappa)\}$ and some $\kappa>0$, 

\noindent 
$\limsup_{m \to \infty}m^{4 + 3\gamma}\sum_{j=m}^{\infty}j\varphi^{\kappa/(2 + \kappa)}(j) <\infty$.

\item[(iii)] $n \psi(1,n)\varphi(c_{n}) \to 0$ as $n \to \infty$, where $c_{n} = \{\delta_{n}^{2}h^{\kappa/(2 + \kappa)}\}^{-1/\gamma}$. 
\end{itemize} 

(A2): \textbf{Assumption on bandwidths and sampling scheme}

As $n \to \infty$, 
\begin{itemize}
\item[(i)] $b_{n}, h_{n} \to 0$.

\item[(ii)] $n(b_{n} + h_{n}) \to \infty$, $\liminf_{n \to \infty}n(b_{n}^{2} + h_{n}^{2})>0$, and $nb_{n}^{5} \to 0$, $nh_{n}^{5} \to 0$. 

\item[(iii)] $\delta_{n}^{-(2 + 2/\gamma)}(b_{n} + h_{n})^{1-2\kappa/\{(2 + \kappa)\gamma\}} \to 0$.
\end{itemize}

(A3): \textbf{Assumption on kernel function}
\begin{itemize}
\item[(i)] The kernel function $K$ is bounded, symmetric, and has a  bounded support. Let $c_{K} = \int_{\mathbb{R}}z^{2}K(z)dz$. 
\end{itemize}

(A4): \textbf{Assumption on regression models}

Let $f$, $f_{j,k}$, $f_{j,k,\ell}$, and $f_{j,k, \ell,m}$ be density functions of $X(s)$, $(X(s_{j}), X(s_{k}))$, $(X(s_{j}), X(s_{k}), X(s_{\ell}))$ and $(X(s_{j}), X(s_{k}), X(s_{\ell}), X(s_{m}))$ with pairwise distinct $j,k,\ell,m$, respectively. For some $\epsilon>0$ and $U \subset \mathbb{R}^{d}$, $U^{\epsilon}$ denotes an $\epsilon$-enlargement of $U$, that is, $U^{\epsilon} := \{x: \|x - y\| <\epsilon, y \in U\}$. 
\begin{itemize}
\item[(i)] $\inf_{x \in I^{\epsilon}}f(x)>0$ and $\inf_{x \in I^{\epsilon}}\sigma(x)>0$ for some compact set $I \subset \mathbb{R}$ and some $\epsilon>0$ and $I \subset \mathbb{R}$. 


\item[(ii)] $\mu \in C^{4}(I^{\epsilon})$, $\sigma \in C^{2}(I^{\epsilon})$, and $f$, $f_{j,k}$, $f_{j,k,\ell}$, and $f_{j,k,\ell,m}$ are bounded uniformly with respect to pairwise distinct $j,k,\ell,m$ where $I^{\otimes d} = \underbrace{I \times \hdots \times I}_{d}$.


Here, for $S \subset \mathbb{R}$,  $C^{p}(S) = \{f: \|f^{(k)}\|_{S}<\infty, k = 0,1,\hdots, p\}$ is the set of functions having bounded derivatives on $S$ up to order $p$, and $C^{0}(S)$ is the set of continuous functions on $S$. 

\item[(iii)] $E[V(s_{1})] = 0$, $E[V^{2}(s_{1})] = 1$, and $E[|V(s_{1})|^{8 + 4\kappa}]<\infty$. Here, $\kappa>0$ is the constant which appear in Assumption (A1) (ii). 

\item[(iv)] $\{X(s): s \in \mathbb{R}^{2}\}$ and $\{V(s): s \in \mathbb{R}^{2}\}$ are independent. 
\end{itemize}

\begin{remark}
The random field $\{Y(s_{j}), X(s_{j})\}$ is called strongly mixing if the condition (\ref{MixRF}) holds with $\psi \equiv 1$. The same or similar conditions are used in \cite{HaLuTr04}, 
and \cite{LuTj14}. The condition can be seen as an extension of strong mixing conditions for continuous-time stochastic processes and time series models. It is known that many stochastic processes and (nonlinear) time series models are strongly mixing. 
The conditions (\ref{MixRF}) and (iii) in Assumption (A1) are 
satisfied by many spatial processes. We refer to \cite{Ro85} and \cite{Gu87} for detailed discussion on strong mixing conditions for random fields. 
\end{remark}


\section{Multivariate central limit theorems of the mean and variance functions}\label{Sec3}

In this section we give central limit theorem of the marginal density, mean, and variance functions of the nonparametric spatial regression model (\ref{NSR}). 
\subsection{Limit theorems for mean and variance functions}

Let $K$ be a kernel function with $\int_{\mathbb{R}}K(x)dx = 1$, $b_{n}$ and $h_{n}$ are bandwidth with $b_{n}, h_{n} \to 0$ and $n(b_{n} + h_{n}) \to \infty$ as $n \to \infty$. We estimate $\mu(x)$ and $\sigma^{2}(x)$ by 
\begin{align*}
\widehat{\mu}_{b_{n}}(x) &= {1 \over nb_{n}\widehat{f}_{X}(x)}\sum_{j=1}^{n}Y(s_{j})K\left({x-X(s_{j}) \over b_{n}}\right),\\
\widehat{\sigma}^{2}_{h_{n}}(x) &=  {1 \over nh_{n}\widetilde{f}_{X}(x)}\sum_{j=1}^{n}\left\{Y(s_{j}) - \widehat{\mu}_{b_{n}}^{\ast}(X(s_{j}))\right\}^{2}K\left({x-X(s_{j}) \over h_{n}}\right)
\end{align*}
where
\begin{align*}
\widehat{f}_{X}(x) &= {1 \over nb_{n}}\sum_{j=1}^{n}K\left({x- X(s_{j}) \over b_{n}}\right),\ 
\widetilde{f}_{X}(x) = {1 \over nh_{n}}\sum_{j=1}^{n}K\left({x- X(s_{j}) \over h_{n}}\right),\ \widehat{\mu}_{b_{n}}^{\ast}(x) &= 2\widehat{\mu}_{b_{n}}(x) - \widehat{\mu}_{\sqrt{2}b_{n}}(x).
\end{align*}

\begin{remark}
The estimator $\widehat{\mu}_{b_{n}}^{\ast}(x)$ is a jackknife version of $\widehat{\mu}_{b_{n}}(x)$. Although, from a theoretical point of view, it is sufficient to assume $\mu \in C^{2}(I^{\epsilon})$ in Assumption (A4) (ii) to derive limit theorems on $\mu$, we use this estimator instead of $\widehat{\mu}_{b_{n}}(x)$ in the definition of $\widehat{\sigma}^{2}_{h_{n}}(x)$ to ignore the effects of its asymptotic bias and for the improvement of its finite sample performance. In fact, under Assumption (A3) and (A4) (four-times continuous differentiability of $\mu$ on $I^{\epsilon}$), we can show that $\widehat{\mu}_{b_{n}}(x) - \mu(x) = O_{P}(b_{n}^{2})$ and $\widehat{\mu}_{b_{n}}^{\ast}(x) - \mu(x) = O_{P}(b_{n}^{4})$ for each $x \in \mathbb{R}$. 
\end{remark}

Now we give limit theorems for $\mu$ and $\sigma$. First we give a multivariate extension of Theorem 1 in \cite{LuTj14}. 
\begin{proposition}\label{P1}
Under Assumptions (A1), (A2), (A3) and (A4), for $-\infty<x_{1}<x_{2}<\cdots < x_{N}<\infty$, we have that
\[
\sqrt{{nb_{n} \over\|K\|_{L^{2}}^{2}}}\left({\widehat{f}(x_{1}) - f(x_{1})  \over \sqrt{f(x_{1})}},\hdots, {\widehat{f}(x_{N}) - f(x_{N})  \over \sqrt{f(x_{N})}} \right)^{\top} \stackrel{d}{\to} N(0,I_{N}). 
\]
where $I_{N}$ is the $N \times N$ identity matrix. 
\end{proposition}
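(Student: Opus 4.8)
The plan is to prove the statement by the Cram\'er--Wold device, reducing the multivariate convergence to a univariate central limit theorem for an arbitrary linear combination. Fix $\lambda = (\lambda_1,\dots,\lambda_N)^\top \in \R^N$ and set
\[
S_n = \sqrt{\frac{nb_n}{\|K\|_{L^2}^2}}\,\sum_{i=1}^N \frac{\lambda_i}{\sqrt{f(x_i)}}\bigl(\hat f(x_i) - f(x_i)\bigr);
\]
the goal is to show $S_n \stackrel{d}{\to} N(0,\|\lambda\|^2)$. First I would split each coordinate into a stochastic part and a bias part, $\hat f(x_i) - f(x_i) = \{\hat f(x_i) - E\hat f(x_i)\} + \{E\hat f(x_i) - f(x_i)\}$. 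Using the symmetry and bounded support of $K$ (Assumption (A3)) together with the smoothness of $f$, the bias is $O(b_n^2)$ pointwise, so after the $\sqrt{nb_n}$ scaling its contribution is $O(\sqrt{nb_n^5})\to 0$ by (A2)(ii). Hence it suffices to treat the centered sum.

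Writing $K_{ij} = K((x_i - X(s_j))/b_n)$, the centered sum becomes $S_n = \sum_{j=1}^n Z_{n,j}$ with
\[
Z_{n,j} = \frac{1}{\sqrt{nb_n}\,\|K\|_{L^2}}\sum_{i=1}^N \frac{\lambda_i}{\sqrt{f(x_i)}}\bigl(K_{ij} - E K_{ij}\bigr),
\]
a mean-zero triangular array of stationary mixing variables. The next step is to verify $\sigma_n^2 := \Var(S_n) \to \|\lambda\|^2$. The diagonal terms are handled by the change of variables $E[K_{ij}^2] = b_n \int K^2(z) f(x_i - b_n z)\,dz = b_n f(x_i)\|K\|_{L^2}^2(1+o(1))$, which produces exactly the variance $\|\lambda\|^2 = \sum_i \lambda_i^2$; the cross-coordinate same-site terms ($i\neq i'$) vanish for large $n$ because the bounded support of $K$ forces $K_{ij}K_{i'j}=0$ once $b_n$ is small, and this disjointness is precisely what yields the identity covariance matrix $I_N$. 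The off-diagonal distinct-site covariances must then be shown negligible: I would split the pairs $(j,k)$ according to whether $\|s_j - s_k\|$ is below or above the cutoff $c_n = \{\delta_n^2 h^{\kappa/(2+\kappa)}\}^{-1/\gamma}$, bounding the near pairs by the uniform boundedness of the bivariate densities $f_{j,k}$ (Assumption (A4)(ii)) and the far pairs by the mixing inequality (\ref{MixRF}) with $\psi(1,1)$ and $\varphi$, invoking the sampling rate conditions (A1)(iii) and (A2)(iii) to control the accumulation of covariance under infill.

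With the variance pinned down, I would establish the CLT for $S_n = \sum_j Z_{n,j}$ via the characteristic-function method of \cite{Bo82}, as signposted in the introduction. The idea is to show that $\phi_n(t) = E[e^{itS_n}]$ satisfies $\phi_n'(t) + t\sigma_n^2 \phi_n(t) \to 0$ uniformly on compact $t$-intervals, whose only bounded solution is the Gaussian characteristic function $e^{-t^2\|\lambda\|^2/2}$. Concretely, one writes $\phi_n'(t) = iE[S_n e^{itS_n}] = i\sum_j E[Z_{n,j} e^{itS_n}]$ and, for each site $j$, decomposes $S_n$ into the contribution of a shrinking spatial neighborhood of $s_j$ and the remainder; a first-order Taylor expansion of $e^{itS_n}$ in the local part, combined with the mixing bound to decouple the remote part, reduces the sum to $it\sigma_n^2\phi_n(t)$ up to remainders. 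These remainders are controlled by a Lyapunov-type bound: since $K$ is bounded and $K_{ij}\neq 0$ only on an event of probability $O(b_n)$, one obtains $\sum_j E|Z_{n,j}|^{2+\delta} = O((nb_n)^{-\delta/2})\to 0$, so the local Taylor errors are asymptotically negligible.

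I expect the main obstacle to be the dependence bookkeeping in the last step under the DEI regime. Because the infill condition $\delta_n \to 0$ lets arbitrarily many observation sites cluster together, the counting of near pairs and the quantitative decoupling of the remote part in Bolthausen's expansion must be carried out carefully, and it is exactly here that the interplay of the mixing decay $\varphi$, the cardinality factor $\psi$, and the rate conditions (A1)(ii)--(iii) and (A2)(iii) is essential; verifying that the neighborhood radius can be chosen so as to make both the local remainder and the remote mixing error vanish simultaneously is the delicate quantitative heart of the argument.
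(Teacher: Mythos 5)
Your proposal is correct in outline, but it takes a genuinely more self-contained route than the paper. The paper's own proof is a short reduction: the univariate CLT for $\hat{f}(x)$ is quoted directly from Theorem 1 of \cite{LuTj14}, and the only new work is to show that $nb_{n}\Cov(\hat{f}(x_{1}),\hat{f}(x_{2}))\to 0$ for distinct design points, splitting this covariance into same-site terms (killed by the eventually disjoint supports of $K((x_{1}-\cdot)/b_{n})$ and $K((x_{2}-\cdot)/b_{n})$, leaving an $O(b_{n})$ remainder) and cross-site terms (handled by citing the covariance argument inside the proof of Theorem 1 of \cite{LuTj14}); joint normality is then asserted via Cram\'er--Wold. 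You instead apply Cram\'er--Wold first and prove the CLT for the linear combination from scratch by Bolthausen's characteristic-function method (bias removal via $nb_{n}^{5}\to 0$, variance identification, local/remote decomposition with mixing decoupling). This amounts to inlining the proof of Lu--Tj\o stheim's theorem --- and of the paper's own Proposition \ref{CLT}, whose proof carries out exactly the $A_{n,1},A_{n,2},A_{n,3}$ expansion you sketch --- into the multivariate statement. What your route buys is logical completeness: marginal CLTs plus vanishing covariances do not by themselves imply joint normality, since Cram\'er--Wold requires a CLT for \emph{every} linear combination; your argument supplies it, with the vanishing cross-covariances entering exactly where they should, namely in identifying the limit variance $\|\lambda\|^{2}$. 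What the paper's route buys is brevity, at the cost of leaving that step implicit. Two caveats on your sketch: first, the $O(b_{n}^{2})$ bias requires $f\in C^{2}$ near the $x_{i}$, which is not explicitly listed in (A4) (the cited theorem needs it as well); second, your claim that the local Taylor errors are controlled by the Lyapunov bound $\sum_{j}E|Z_{n,j}|^{2+\delta}=O((nb_{n})^{-\delta/2})$ alone is too quick, because under infill the neighborhood sums contain $O((c_{n}/\delta_{n})^{2})$ terms, so the $A_{n,2}$-type remainder needs the packing count and the rate conditions (A1)(ii)--(iii) and (A2)(iii) --- a point you correctly flag in your closing paragraph but do not carry out.
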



Next we give a general limit theorem for nonparametric spatial regression models. The following theorem are used to prove multivariate central limit theorems of $\hat{\mu}_{b_{n}}$ and $\hat{\sigma}^{2}_{h_{n}}$. 
\begin{proposition}\label{CLT}
Assume that $G$ and $H$ are functions such that 
\begin{itemize}
\item[(i)] $G \in C^{0}(\{x\}^{\epsilon})$ for some $\epsilon>0$.  
\item[(ii)] $H: \mathbb{R} \to \mathbb{R}$ with $E[H(V(s_{1}))] = 0$, $E[H^{2}(V(s_{1}))]>0$ and $E[|H(V(s_{1}))|^{4 + 2\kappa}]<\infty$. 
\end{itemize}
Define
\[
U_{n} = {1 \over \sqrt{nh_{n}}}\sum_{j=1}^{n}G(X(s_{j}))H(V(s_{j}))K\left({x - X(s_{j}) \over h_{n}}\right).
\] 
Then, under Assumptions (A1), (A2), (A3) and (A4), for $x \in I$ we have that 

$U_{n} \stackrel{d}{\to} N(0, f(x)G^{2}(x)V(H)\|K\|_{L^{2}}^{2})$ as $n \to \infty$, where $V(H) = E[H^{2}(V(s_{1}))]$. 
\end{proposition}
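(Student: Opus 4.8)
\emph{Strategy.} The key structural feature I would exploit is that, by Assumption (A4)(iv), the field $\{V(s)\}$ is independent of $\{X(s)\}$, and the $V(s_j)$ are i.i.d. with $E[H(V(s_1))]=0$. Writing $\mG=\sigma(X(s):s\in\mathbb{R}^2)$ and setting
\[
c_{j,n}={1\over\sqrt{nh_{n}}}\,G(X(s_{j}))K\!\left({x-X(s_{j})\over h_{n}}\right),\qquad U_{n}=\sum_{j=1}^{n}c_{j,n}H(V(s_{j})),
\]
the weights $c_{j,n}$ are $\mG$-measurable while the $H(V(s_{j}))$ are i.i.d., mean zero, and independent of $\mG$. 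Hence, \emph{conditionally on $\mG$}, $U_{n}$ is a weighted sum of independent mean-zero variables, and the spatial dependence (which prevents the block arguments discussed in the introduction) is confined entirely to the weights. This is what makes the characteristic-function approach of \cite{Bo82} effective here: I would prove the conditional characteristic function converges, then pass to the unconditional limit by bounded convergence and Lévy's continuity theorem.

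\emph{Conditional characteristic function.} Let $\varphi_{H}(u)=E[e^{iuH(V(s_{1}))}]$. By conditional independence,
\[
E\!\left[e^{itU_{n}}\mid\mG\right]=\prod_{j=1}^{n}\varphi_{H}\!\left(t\,c_{j,n}\right).
\]
Since $E[H(V(s_1))]=0$ and $E[H^{2}(V(s_1))]=V(H)<\infty$, we have $\varphi_{H}(u)=1-\tfrac12 V(H)u^{2}+o(u^{2})$ as $u\to0$. Taking logarithms and expanding, the conditional log-characteristic function reduces to $-\tfrac12 t^{2}V(H)\sum_{j=1}^{n}c_{j,n}^{2}$ plus a remainder. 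The whole argument therefore comes down to two facts: (a) the quadratic functional $\sum_{j=1}^{n}c_{j,n}^{2}=\tfrac1{nh_{n}}\sum_{j=1}^{n}G^{2}(X(s_{j}))K^{2}((x-X(s_{j}))/h_{n})$ converges in probability to $f(x)G^{2}(x)\|K\|_{L^{2}}^{2}$; and (b) $\max_{1\le j\le n}|c_{j,n}|\to0$, which controls the expansion remainder. Granting (a)--(b), $E[e^{itU_{n}}\mid\mG]\to\exp(-\tfrac12 t^{2}f(x)G^{2}(x)V(H)\|K\|_{L^{2}}^{2})$ in probability; bounded convergence gives the same limit for $E[e^{itU_{n}}]$, and Lévy's theorem yields the claim.

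\emph{Verifying (a) and (b).} Claim (b) is the easy half: since $K$ is bounded with bounded support (A3), $K((x-X(s_{j}))/h_{n})\neq0$ forces $X(s_{j})\in\{x\}^{\epsilon}$ for $n$ large, where $G$ is bounded by continuity (A4)(i) and hypothesis (i); hence $|c_{j,n}|\le C/\sqrt{nh_{n}}$ deterministically, and $\max_j|c_{j,n}|\to0$ because $nh_{n}\to\infty$ by (A2)(ii). Claim (a) is a weak law of large numbers for the quadratic functional. Its expectation is handled by the usual change of variables $u=x-h_{n}z$, giving $\int G^{2}(x-h_{n}z)K^{2}(z)f(x-h_{n}z)\,dz\to G^{2}(x)f(x)\|K\|_{L^{2}}^{2}$ by continuity of $f$ and $G$ and boundedness of $K$.

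\emph{Main obstacle.} The decisive step is showing that the variance of the functional in (a) vanishes. Splitting into diagonal and off-diagonal contributions, the diagonal part is $O((nh_{n})^{-1})\to0$ after the same rescaling. The off-diagonal sum $\tfrac1{(nh_{n})^{2}}\sum_{j\neq k}\Cov(G^{2}(X(s_{j}))K^{2}(\cdot),G^{2}(X(s_{k}))K^{2}(\cdot))$ is the hard part: I would bound the near pairs using the uniform boundedness of the bivariate densities $f_{j,k}$ from (A4)(ii), and the far pairs using the mixing inequality (\ref{MixRF}) with the decay of $\varphi$, balancing the two regimes against the DEI sampling rates via Assumptions (A1)(ii)--(iii) and (A2)(iii). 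This is precisely the covariance estimate already carried out in the proof of Proposition \ref{P1}, and I expect to reuse those bounds verbatim (with $G^{2}K^{2}$ in place of $K$). The moment condition $E[|H(V(s_{1}))|^{4+2\kappa}]<\infty$ in hypothesis (ii) is stronger than needed for the variance argument but ensures a Lyapunov-type bound that makes the remainder control in the expansion of $\sum_j\log\varphi_H(tc_{j,n})$ rigorous.
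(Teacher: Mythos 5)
Your proof is correct, and it takes a genuinely different route from the paper's. The paper works unconditionally: it invokes Lemma 2 of \cite{Bo82} (a Stein-type characteristic-function criterion), verifying $\sup_{n}E[U_{n}^{2}]<\infty$ and $E[(i\lambda-U_{n})e^{i\lambda U_{n}}]\to 0$ by decomposing the latter into three terms controlled through the neighbourhoods $J_{n,j}(p)$ of radius $pc_{n}$, fourth-moment covariance bounds (its Lemma A.1, via Proposition 2.5 of \cite{FaYa03}), and the coupling inequality of \cite{RoIo87}; that last step is where Assumption (A1)(iii) and the moment condition $E[|H(V(s_{1}))|^{4+2\kappa}]<\infty$ are consumed. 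You instead condition on $\mG=\sigma(X(s):s\in\R^{2})$: by (A4)(iii)--(iv) the summands are then independent and mean zero, so all spatial dependence is confined to the $\mG$-measurable weights $c_{j,n}$; the conditional characteristic function factorizes, the deterministic bound $\max_{j}|c_{j,n}|\lesssim(nh_{n})^{-1/2}$ (valid since $G$ is bounded on $\{x\}^{\epsilon}$ and $K$ is bounded with bounded support) controls the expansion of $\prod_{j}\varphi_{H}(tc_{j,n})$, and everything reduces to the weak law $\sum_{j}c_{j,n}^{2}\to f(x)G^{2}(x)\|K\|_{L^{2}}^{2}$ in probability, the only point where mixing enters. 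What your route buys: it is more elementary (no Bolthausen/Stein machinery, no \cite{RoIo87} inequality, no use of (A1)(iii)), and it genuinely needs only $E[H^{2}(V(s_{1}))]<\infty$; the $(4+2\kappa)$-th moments exist in the paper solely to feed the fourth-moment bounds of Lemma A.1, which you never form, so your closing remark that they ensure a ``Lyapunov-type bound'' undersells your own argument---under your approach they can be dropped entirely, since the single function $\varphi_{H}(u)=1-\tfrac{1}{2}V(H)u^{2}+o(u^{2})$ evaluated at uniformly vanishing arguments requires nothing beyond two moments. What the paper's route buys: it never uses the product structure $G(X)H(V)$ with $H(V(s_{j}))$ i.i.d., mean-zero and independent of $X$, so the identical machinery also proves Proposition \ref{P1}, where the randomness lies entirely in $X$ and your conditioning trick is unavailable. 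One caution: the estimate you propose to reuse ``verbatim'' from Proposition \ref{P1} is there a bound on the covariance of $\hat{f}$ at two \emph{distinct} design points, whereas you need the variance of a $G^{2}K^{2}$-weighted average at a \emph{single} point; the structure of the argument is indeed the same (near pairs via the uniformly bounded $f_{j,k}$ of (A4)(ii), contributing $O(h_{n}^{2})$ per pair, far pairs via (\ref{MixRF}), (A1)(ii) and packing in $\delta_{n}$), and the resulting bounds are dominated by quantities the paper itself asserts to vanish in its Steps 2--3 of the proof of Proposition \ref{CLT}, but this computation must be written out, as in Theorem 1 of \cite{LuTj14}, rather than cited.
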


By using Proposition \ref{CLT}, we can finally derive the following two theorems. 
\begin{theorem}\label{P2}
Under Assumptions (A1), (A2), (A3) and (A4), for $-\infty<x_{1}<x_{2}<\cdots < x_{N}<\infty$, we have that
\begin{align*}
&\sqrt{{nb_{n} \over\|K\|_{L^{2}}^{2}}}\left({\sqrt{f(x_{1})}(\widehat{\mu}_{b_{n}}(x_{1}) - \mu(x_{1}))  \over \sqrt{\sigma^{2}(x_{1})}},\hdots, {\sqrt{f(x_{N})}(\widehat{\mu}_{b_{n}}(x_{N}) - \mu(x_{N}))  \over \sqrt{\sigma^{2}(x_{N})}} \right)^{\top} \stackrel{d}{\to} N(0,I_{N}). 
\end{align*}
\end{theorem}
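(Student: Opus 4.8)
The plan is to reduce the statement to Proposition~\ref{CLT} applied with $G=\sigma$ and $H(v)=v$, together with the Cram\'er--Wold device to pass from the one-dimensional marginals to the joint limit. Fixing one of the points and writing $x=x_i$, I would first multiply through by $\widehat{f}_X(x)$ and center, using the model (\ref{NSR}), to obtain
\begin{align*}
\widehat{f}_X(x)\{\widehat{\mu}_{b_n}(x)-\mu(x)\}
&={1\over nb_n}\sum_{j=1}^n\{Y(s_j)-\mu(x)\}K\left({x-X(s_j)\over b_n}\right)
=T_n(x)+B_n(x),
\end{align*}
where
\begin{align*}
T_n(x)&={1\over nb_n}\sum_{j=1}^n\sigma(X(s_j))V(s_j)K\left({x-X(s_j)\over b_n}\right),\\
B_n(x)&={1\over nb_n}\sum_{j=1}^n\{\mu(X(s_j))-\mu(x)\}K\left({x-X(s_j)\over b_n}\right),
\end{align*}
so that $T_n$ carries the stochastic fluctuation and $B_n$ the deterministic bias.

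Next I would treat the two pieces. The term $\sqrt{nb_n}\,T_n(x)$ is precisely the statistic $U_n$ of Proposition~\ref{CLT} with $h_n=b_n$, $G=\sigma$ and $H(v)=v$; the moment conditions $\E[V(s_1)]=0$, $\E[V^2(s_1)]=1>0$ and $\E[|V(s_1)|^{4+2\kappa}]<\infty$ follow from (A4)~(iii), and $\sigma\in C^0(\{x\}^\epsilon)$ from (A4)~(i)--(ii), so Proposition~\ref{CLT} gives $\sqrt{nb_n}\,T_n(x)\stackrel{d}{\to}N(0,f(x)\sigma^2(x)\|K\|_{L^2}^2)$. For the bias, a change of variables, a Taylor expansion of $\mu$ and $f$ around $x$, and the symmetry $\int zK(z)\,dz=0$ from (A3) give $\E[B_n(x)]=O(b_n^2)$, while a second-moment bound (exploiting the bounded support of $K$, so that $|\mu(X(s_j))-\mu(x)|=O(b_n)$ on the relevant event, together with the mixing bounds of (A1)) gives $\Var(B_n(x))=o(1/(nb_n))$; hence $\sqrt{nb_n}\,B_n(x)=o_P(1)$, the deterministic part being of order $\sqrt{nb_n^5}\to0$ by (A2)~(ii).

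Since Proposition~\ref{P1} yields $\widehat{f}_X(x)\stackrel{p}{\to}f(x)$, which is strictly positive on $I^\epsilon$ by (A4)~(i), Slutsky's theorem then gives
\[
\sqrt{nb_n}\{\widehat{\mu}_{b_n}(x)-\mu(x)\}
={\sqrt{nb_n}\,T_n(x)+\sqrt{nb_n}\,B_n(x)\over\widehat{f}_X(x)}
\stackrel{d}{\to}N\!\left(0,{\sigma^2(x)\|K\|_{L^2}^2\over f(x)}\right),
\]
and rescaling by $\sqrt{f(x)/\sigma^2(x)}/\|K\|_{L^2}$ produces the asserted standard-normal marginal at each $x_i$.

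Finally, for the joint law I would invoke the Cram\'er--Wold device. For arbitrary $\lambda_1,\dots,\lambda_N$ the corresponding linear combination of the rescaled stochastic terms equals
\[
{1\over\sqrt{nb_n}}\sum_{j=1}^n\sigma(X(s_j))V(s_j)\sum_{i=1}^N\tilde\lambda_i\,K\left({x_i-X(s_j)\over b_n}\right),
\qquad
\tilde\lambda_i={\lambda_i\over\|K\|_{L^2}\sqrt{f(x_i)\sigma^2(x_i)}},
\]
which is again of the form treated in Proposition~\ref{CLT}, now with the single kernel replaced by $\sum_i\tilde\lambda_i K((x_i-\cdot)/b_n)$; the characteristic-function argument of \cite{Bo82} underlying that proposition applies to this array unchanged. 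Because $K$ has bounded support by (A3) and the $x_i$ are distinct, for all large $n$ the kernels $K((x_i-\cdot)/b_n)$ have pairwise disjoint supports, so every cross product vanishes and the limiting variance collapses to $\sum_i\tilde\lambda_i^2 f(x_i)\sigma^2(x_i)\|K\|_{L^2}^2=\sum_i\lambda_i^2$, which is the variance of $N(0,I_N)$ along the direction $\lambda$. I expect this last step---verifying that the proof of Proposition~\ref{CLT} transfers verbatim to the summed kernel and that the disjoint-support property annihilates the off-diagonal covariances---to be the main obstacle, the remaining arguments being either delegated to Proposition~\ref{CLT} or routine bias and Slutsky computations.
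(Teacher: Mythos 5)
Your proposal is correct and follows essentially the same route as the paper: decompose $\widehat{\mu}_{b_n}-\mu$ into a stochastic term plus a bias term, invoke Proposition~\ref{CLT} with $G=\sigma$ and $H(v)=v$ for the fluctuation, kill the bias via $nb_n^5\to0$, handle the denominator $\widehat{f}_X$ by consistency (the paper replaces $\widehat{f}$ by $f$ outright where you use Slutsky), and obtain the joint limit by Cram\'er--Wold together with vanishing cross-covariances at distinct design points. Your disjoint-support argument for the cross terms is a slightly more explicit version of the paper's appeal to the covariance computation in the proof of Proposition~\ref{P1}, and your variance bound for the bias term is, if anything, more careful than the paper's treatment.
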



\begin{theorem}\label{P3}
Under Assumptions (A1), (A2), (A3) and (A4), and $E[V^{4}(s_{1})]>1$, for $-\infty<x_{1}<x_{2}<\cdots < x_{N}<\infty$, we have that
\begin{align*}
&\sqrt{{nh_{n} \over V_{4}\|K\|_{L^{2}}^{2}}}\left({\sqrt{f(x_{1})}(\widehat{\sigma}^{2}_{h_{n}}(x_{1}) - \sigma^{2}(x_{1}))  \over \sqrt{\sigma^{4}(x_{1})}},\hdots, {\sqrt{f(x_{N})}(\widehat{\sigma}^{2}_{h_{n}}(x_{N}) - \sigma^{2}(x_{N}))  \over \sqrt{\sigma^{4}(x_{N})}} \right)^{\top} \stackrel{d}{\to} N(0,I_{N}). 
\end{align*}
where $V_{4} = E[V^{4}(s_{1})]-1$. 
\end{theorem}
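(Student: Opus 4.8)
The plan is to reduce Theorem \ref{P3} to Proposition \ref{CLT} by expanding the squared residual and showing that the effect of estimating $\mu$ is asymptotically negligible. Writing the model as $Y(s_j) - \mu(X(s_j)) = \sigma(X(s_j))V(s_j)$ and setting $R_j = \widehat{\mu}^*_{b_n}(X(s_j)) - \mu(X(s_j))$, I would first decompose
\[
\{Y(s_j) - \widehat{\mu}^*_{b_n}(X(s_j))\}^2 = \sigma^2(X(s_j))V^2(s_j) - 2\sigma(X(s_j))V(s_j)R_j + R_j^2 .
\]
Substituting this into $\widehat{\sigma}^2_{h_n}(x)$ splits the estimator into a leading term driven by $\sigma^2 V^2$ and two remainder terms. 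Using $V^2(s_j) = 1 + (V^2(s_j)-1)$ and $E[V^2(s_1)]=1$, I would further write the leading term as
\[
{1 \over nh_n\widetilde{f}_X(x)}\sum_{j=1}^n\sigma^2(X(s_j))K\Big({x-X(s_j) \over h_n}\Big) + {1 \over nh_n\widetilde{f}_X(x)}\sum_{j=1}^n\sigma^2(X(s_j))(V^2(s_j)-1)K\Big({x-X(s_j) \over h_n}\Big).
\]
The first piece is a Nadaraya--Watson-type estimator of $\sigma^2(x)$ whose centering returns $\sigma^2(x)$ plus a bias of order $h_n^2$ (by (A3), (A4)(ii), and a Taylor expansion); since $nh_n^5 \to 0$ in (A2)(ii), this bias is negligible after multiplication by $\sqrt{nh_n}$.

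For the second, leading stochastic piece I would apply Proposition \ref{CLT} with $G = \sigma^2$ and $H(v) = v^2 - 1$. Condition (i) holds because $\sigma \in C^2(I^\epsilon)$, so $\sigma^2 \in C^0(\{x\}^\epsilon)$; condition (ii) holds because $E[H(V)] = E[V^2]-1 = 0$, $V(H) = E[(V^2-1)^2] = E[V^4]-1 = V_4 > 0$ under the hypothesis $E[V^4(s_1)]>1$, and $E[|V^2-1|^{4+2\kappa}]<\infty$ by (A4)(iii). Proposition \ref{CLT} then yields
\[
{1 \over \sqrt{nh_n}}\sum_{j=1}^n\sigma^2(X(s_j))(V^2(s_j)-1)K\Big({x-X(s_j) \over h_n}\Big) \stackrel{d}{\to} N(0, f(x)\sigma^4(x)V_4\|K\|_{L^2}^2).
\]
Combining this with the consistency $\widetilde{f}_X(x) \stackrel{p}{\to} f(x)$ (established as in Proposition \ref{P1}) and Slutsky's lemma gives the univariate limit with variance $\sigma^4(x)V_4\|K\|_{L^2}^2/f(x)$, which is exactly the normalization appearing in the theorem, so that the scaled statistic converges to $N(0,1)$.

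The main work is to show that the two remainder terms are $o_P((nh_n)^{-1/2})$. For the quadratic term $(nh_n\widetilde{f}_X(x))^{-1}\sum_j R_j^2 K(\cdot)$ I would bound its expectation by $O(b_n^8 + (nb_n)^{-1})$, using that the squared bias of $\widehat{\mu}^*_{b_n}$ is $O(b_n^8)$ (the jackknife construction, cf.\ the Remark) and its variance is $O((nb_n)^{-1})$; after multiplication by $\sqrt{nh_n}$ this is negligible under (A2). The cross term $(nh_n\widetilde{f}_X(x))^{-1}\sum_j \sigma(X(s_j))V(s_j)R_j K(\cdot)$ is the delicate one: because $\widehat{\mu}^*_{b_n}(X(s_j))$ is itself built from $V(s_j)$, the product $V(s_j)R_j$ is not centered. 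I would split $R_j$ into its deterministic bias part, which contributes a mean-zero term (since $E[V(s_1)]=0$) of variance $O(b_n^8)$, and its centered stochastic part, which after expanding $\widehat{\mu}^*_{b_n}$ as a kernel-weighted sum separates into a diagonal contribution (where $V^2(s_j)$ appears, controlled by $nb_n^5 \to 0$ and the relative-rate conditions in (A2)) and an off-diagonal contribution that is mean zero by independence of the $V(s_j)$ and whose variance is bounded through the mixing assumption (A1). I expect this cross-term analysis, in particular keeping track of the $h_n$-versus-$b_n$ scalings in the diagonal term, to be the principal obstacle.

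Finally, for the multivariate statement I would invoke the Cram\'er--Wold device: for any $a \in \R^N$ the linear combination of the coordinate statistics is again a sum of leading stochastic terms plus remainders of the type handled above, and for distinct points $x_1 < \cdots < x_N$ the kernels $K((x_i-\cdot)/h_n)$ have eventually disjoint supports since $h_n \to 0$. Hence the cross-covariances between distinct coordinates vanish asymptotically, the limiting covariance matrix is diagonal, and the joint limit is $N(0,I_N)$, in parallel with the proofs of Propositions \ref{P1} and \ref{CLT}.
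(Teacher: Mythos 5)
Your proposal is correct and follows essentially the same route as the paper's proof: the same expansion of the squared residual, the same reduction of the leading stochastic term to Proposition \ref{CLT} with $G=\sigma^{2}$ and $H(v)=v^{2}-1$, the same $O(h_{n}^{2})$ bias argument killed by $nh_{n}^{5}\to 0$, and Cram\'er--Wold plus asymptotically vanishing cross-covariances for the multivariate statement (noting, as you do by pointing back to Proposition \ref{P1}, that for pairs of distinct locations $s_{i}\neq s_{j}$ one needs the mixing-based covariance computation, since disjoint kernel supports only dispose of the same-location terms). The only substantive difference is that you treat the cross term $\sum_{j}\sigma(X(s_{j}))V(s_{j})R_{j}K((x-X(s_{j}))/h_{n})$ with more care than the paper, which dismisses both remainders in one line from the pointwise rate $\widehat{\mu}^{\ast}_{b_{n}}(y)-\mu(y)=O(b_{n}^{4})+O_{P}(1/\sqrt{nb_{n}})$ together with $b_{n}\sim h_{n}$; since that crude bound yields only $O_{P}(1/\sqrt{nh_{n}})$ for the cross term rather than $o_{P}(1/\sqrt{nh_{n}})$, your split into a bias part and a centered stochastic part with separate diagonal and off-diagonal contributions is, if anything, a needed refinement of the published argument.
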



\begin{remark}
The mixing condition could be relaxed to a more general near epoch dependence (at least for random fields observed at (irregularly spaced) lattice points). For example, we refer to \cite{LuLi07} and Li, Lu and Linton(2012). They study nonparametric inference and estimation of time series regression models respectively. However, their analysis is based on regularly spaced time series and the assumption is essential in those papers. Recently, there are new techniques for Gaussian approximation of time series based on strong approximation (e.g. \cite{LiLi09}) or combinations of Slepian's smart path interpolation (\cite{Rol11}) to the solution of Stein' s partial deferential equation, Stein's leave-one(-block)-out method (\cite{St86}) and other analytical techniques  (e.g. see \cite{ZhWu17} and \cite{ZhCh17} under different physical dependence, and \cite{ChChKa13} under $\beta$-mixing sequences). If we work with domain-expanding but not infill asymptotics ($\lim_{n \to \infty}\Delta_{n} = \infty$ and $\liminf_{n \to \infty}\delta_{n}>0$), we would able to relax our mixing condition and also provide a method to construct (asymptotically) uniform confidence bands as results of high-dimensional extensions of our theorems to the case that number of design points in an interval increases as the sample size $n$ goes to infinity (i.e. $x_{j} \in I, j=1,\hdots,N$, $N = N_{n} \to \infty$ as $n \to \infty$) by using techniques in those papers (see \cite{Ku18} for time series case). However, if we work with DEI asymptotics, to achieve such results would need careful treatment of the dependence among observations and the author believes that it requires substantial work. 
\end{remark}

\subsection{Confidence bands for mean and variance functions}

Based on the multivariate central limit theorems in the previous section, we propose a method to construct confidence bands for mean and variance functions on a finite interval $I$ included in $\mathbb{R}$. 
We estimate $V_{4}$ by 
\[
\widehat{V}_{4} = {\sum_{j = 1}^{n}\widehat{V}^{4}(s_{j})1\{X(s_{j}) \in I\} \over \sum_{j=1}^{n}1\{X(s_{j}) \in I\}} -1,\ \text{where}\ \widehat{V}(s_{j}) = {Y(s_{j}) - \widehat{\mu}^{\ast}_{b_{n}}(X(s_{j})) \over \widehat{\sigma}_{h_{n}}(X(s_{j}))}
\]
instead of the naive estimator $n^{-1}\sum_{j=1}^{n}\widehat{V}^{4}(s_{j})-1$ to improve finite sample performance. 
Let $\xi_{1},\hdots, \xi_{N}$ be i.i.d. standard normal random variables, and let $q_{\tau}$ satisfy $P\left(\max_{1 \leq j \leq N}|\xi_{j}| > q_{\tau}\right) = \tau$ for $\tau \in (0,1)$.
Then, $\widehat{C}_{F_{1}}(x_{j}) = \left[\widehat{F}_{1}(x_{j}) \pm {\widehat{F}_{2}(x_{j}) \over \sqrt{nh_{n}}}q_{\tau}\right],\ j = 1,\hdots, N$ 
are joint asymptotic $100(1-\tau)$\% confidence intervals of $f$, $\mu$, and $\sigma^{2}$ when $(\hat{F}_{1}, \hat{F}_{2}) = \left(\hat{f}, \sqrt{\hat{f}}\|K\|_{L^{2}}\right),\ \left(\hat{\mu}, \hat{\sigma}\|K\|_{L^{2}}/\sqrt{\hat{f}}\right)$, and $\left(\hat{\sigma}^{2}, \hat{\sigma}^{2}\|K\|_{L^{2}}\sqrt{\hat{V}_{4}/\hat{f}}\right)$, 
respectively. 

\section{Simulations}\label{Sec4}
In this section we present simulation results to see the finite-sample performance of the central limit theorems and proposed confidence bands in Section \ref{Sec3}. 

\subsection{Simulation framework}

To generate the locations irregularly positioned in $\mathbb{R}^{2}$, first we set a lattice $(u_{j}, v_{k})$ with $u_{j} = u_{0} + (j-1)\times 0.3$ and $v_{k} = v_{0} + (k-1)\times 0.3$ for $j,k = 1,\hdots, n$ where $u_{0} = 0.3$ and $v_{0} = 0.6$. Next we select $n$ locations randomly from the lattice as the irregular locations $(u_{j_{\ell}},v_{k_{\ell}})$ with $1 \leq j_{\ell}, k_{\ell} \leq n$, $\ell = 1,\hdots n$ and set $s_{\ell} = (u_{j_{\ell}}, v_{k_{\ell}})$. As a data generating process, we consider the following spatial moving average process. 
\begin{align}\label{SMA}
X(s_{j}) &= \sum_{\ell = -1}^{1}\sum_{m = -1}^{1}a_{\ell,m}Z_{\ell,m},\ s_{j} = (u_{\ell}, v_{m}),\ j=1,\hdots, n,
\end{align}
where $Z_{\ell,m}$ are independent and identically distributed standard normal random variables, $a_{\ell,m}$ is the $(\ell +2,m+2)$-component of the matrix 
\begin{align*}
A &= 
\left(
\begin{array}{ccc}
1/5 & 2/5 & -4/5 \\
-3/5 & -2/5 & -1/5 \\
-1/5 & 2/5 & -3/5
\end{array} 
\right).
\end{align*}
We also consider $\mu(x) = 0.1 + 0.3x$ and $\sigma^2(x) = 0.2 + 0.05x + 0.3x^2$ as the mean and variance functions respectively, and use i.i.d. standard Gaussian random variables as noise variables $\{V(s_{j})\}_{j=1}^{n}$. In our simulation study, we use the Epanechnikov kernel $K(x) = {3 \over 4}(1-x^{2})1\{|x| \leq 1\}$ and set the sample size $n$ as 750. Note that Assumptions (A1) on the spatial process $X$ is satisfied from the definition of the spatial moving average process (\ref{SMA}). 

\subsection{Bandwidth selection}
Now we discuss bandwidth selection for the construction of confidence bands on a finite interval $I$. Let $I$ be a finite interval, $x_{1}<\cdots < x_{J} $ be design points with $x_{j} \in I$ for $j = 1,\hdots, J$, and let $0<b_{1}<\cdots<b_{L}$ and $0<h_{1}<\cdots<h_{L}$ be grids of bandwidths. We use a data-driven method which is similar to that proposed in \cite{Ku18}. 
From a theoretical point of view, we have to choose bandwidths that are of smaller order than the optimal rate for estimation under the loss functions (or a ``discretized version'' of $L^{\infty}$-distances) $\max_{1 \leq j \leq J}| \hat{\mu}_{b_{\ell}}(x_{j}) - \mu(x_{j})|$ and $\max_{1 \leq j \leq J}| \hat{\sigma}^{2}_{h_{\ell}}(x_{j}) - \sigma^{2}(x_{j})|$ for our confidence bands to work. At the same time, choosing a too small bandwidth results in a too wide confidence band. Therefore, we should choose a bandwidth ``slightly'' smaller than the optimal one that minimizes those loss functions. 
We employ the following rule for bandwidth selection of $\hat{\mu}_{b_{n}}$. We also choose a bandwidth of $\hat{\sigma}^{2}_{h_{n}}$ in a similar manner.  
\begin{enumerate}
\item Set a pilot bandwidth $b^{P} >0$ and make a list of candidate bandwidths $b_{\ell} = \ell b^{P}/L$ for $\ell=1,\dots,L$. 
\item Choose the smallest bandwidth $b_{\ell} \ (\ell \geq 2)$ such that the adjacent value $\max_{1 \leq j \leq J}| \hat{\mu}_{b_\ell}(x_{j}) - \hat{\mu}_{b_{\ell-1}}(x_{j})|$ is smaller than $\tau \times \min \{ \max_{1 \leq j \leq J}| \hat{\mu}_{b_{\ell}}(x_{j}) - \hat{\mu}_{b_{\ell-1}}(x_{j}) | : \ell=2,\dots,L \}$ for some $\tau > 1$. 
\end{enumerate}
For the bandwidth selection of $\hat{\sigma}^{2}_{h_{n}}$, we first choose a bandwidth ($\hat{b}_{n}$) of $\hat{\mu}^{\ast}_{b_{n}}$ by the proposed rule. Then plug $\hat{\mu}^{\ast}_{\hat{b}_{n}}$ into $\hat{\sigma}^{2}_{h_{n}}$ and we finally choose a bandwidth $\hat{h}_{n}$ by the proposed rule. In our simulation study, we choose $b^{P} = h^{P} = 1, L = 20$, and $\tau= 2$. This rule would choose a bandwidth slightly smaller than a bandwidth 
which is intuitively the optimal bandwidth for the estimation of $\mu$ and $\sigma^{2}$ as long as the threshold value $\tau$ is reasonably chosen.

Figure \ref{fig:A1} depicts five realizations of the loss functions $\max_{1 \leq j \leq J}|\hat{\mu}_{b_{n}}(x_{j}) - \mu(x_{j})|$ (left) and $\max_{1 \leq j \leq J}|\hat{\sigma}^{2}_{h_{n}}(x_{j}) - \sigma^{2}(x_{j})|$(right) with different bandwidth values and $x_{j} = -0.5 + (j-1) \times 0.1$, $j = 1,\hdots, 11$. 
Figure \ref{fig:A2} depicts five realizations of the loss function $\max_{2 \leq j \leq 11}|\hat{\mu}_{b_{\ell}}(x_{j}) - \hat{\mu}_{b_{\ell-1}}(x_{j})|$ (left) and $\max_{1 \leq j \leq 11}|\hat{\sigma}^{2}_{h_{\ell}}(x_{j}) - \hat{\sigma}^{2}_{h_{\ell-1}}(x_{j})|$ (right). It is observed that the shape of $\max_{1 \leq j \leq 11}| \hat{\mu}_{b_{\ell}}(x_{j}) - \hat{\mu}_{b_{\ell-1}}(x_{j})|$  partly mimics that of $\max_{1 \leq j \leq 11}| \hat{\mu}_{b_{\ell}}(x_{j}) - \mu(x_{j})|$. The same thing can be said about $\max_{1 \leq j \leq 11}| \hat{\sigma}^{2}_{h_{\ell}}(x_{j}) - \hat{\sigma}^{2}_{h_{\ell-1}}(x_{j})|$.  By using the proposed rule and the visual information of Figure \ref{fig:A1}, we set $b_{n} = h_{n} = 0.5$ to draw Figures \ref{fig:A4} and \ref{fig:A5}. 


\begin{remark}
In practice, it is also recommended to make use of visual information on how $\max_{1\leq j \leq J}| \hat{\mu}_{b_{\ell}}(x_{j}) - \hat{\mu}_{b_{\ell-1}}(x_{j})|$ and $\max_{1\leq j \leq J}| \hat{\sigma}^{2}_{h_{\ell}}(x_{j}) - \hat{\sigma}^{2}_{h_{\ell-1}}(x_{j})|$ behave as $\ell$ increases when determining the bandwidth. 
\end{remark}

\begin{figure}[H]
  \begin{center}
    \begin{tabular}{cc}

      \begin{minipage}{0.5\hsize}
        \begin{center}
          \includegraphics[clip, width=4cm]{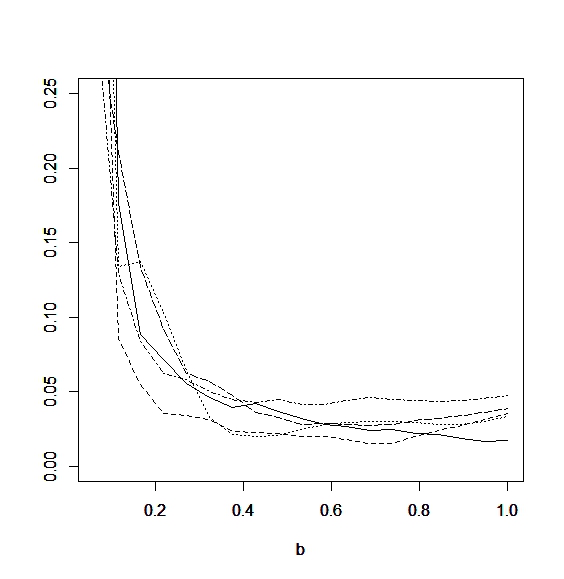}
        \end{center}
      \end{minipage}

      \begin{minipage}{0.5\hsize}
        \begin{center}
          \includegraphics[clip, width=4cm]{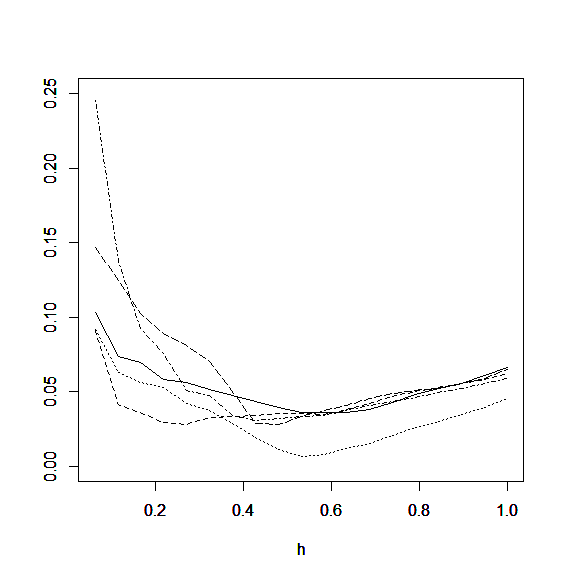}
        \end{center}
      \end{minipage}

    \end{tabular}
    \caption{Discrete $L^{\infty}$-distance between the true function $\mu$ and estimates $\hat{\mu}_{b_{n}}$ (left), and between the true function $\sigma^{2}$ and estimates $\hat{\sigma}^{2}_{h_{n}}$ (right) for different bandwidth values. We set $n = 750$, $I = [-0.5, 0,5]$, and $x_{j} = -0.5 + (j-1) \times 0.1$, $j = 1,\hdots, 11$. \label{fig:A1}}   
  \end{center}
\end{figure}

\begin{figure}[H]
  \begin{center}
    \begin{tabular}{cc}

      \begin{minipage}{0.5\hsize}
        \begin{center}
          \includegraphics[clip, width=4cm]{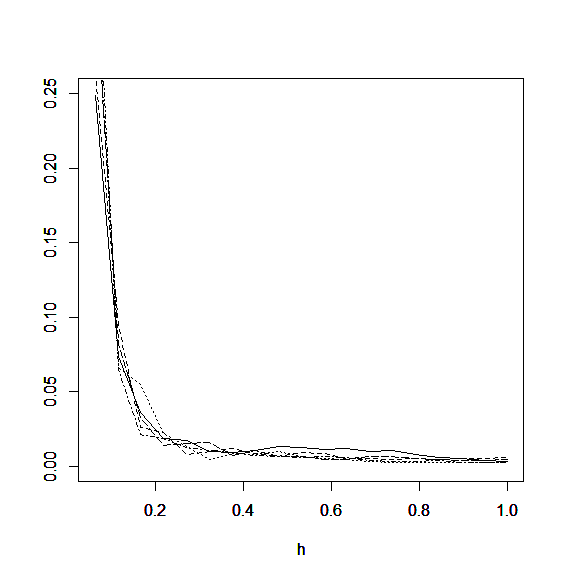}
        \end{center}
      \end{minipage}

      \begin{minipage}{0.5\hsize}
        \begin{center}
          \includegraphics[clip, width=4cm]{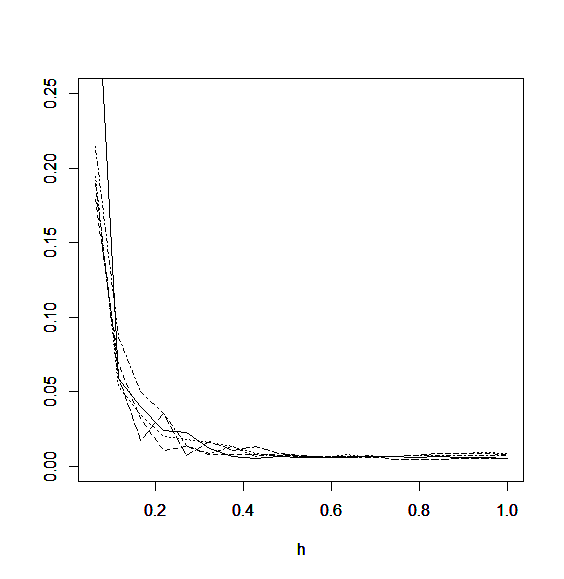}
        \end{center}
      \end{minipage}

    \end{tabular}
    \caption{Discrete $L^{\infty}$-distance between the estimates $\hat{\mu}_{b_{n}}$ (left), and between the estimates $\hat{\sigma}^{2}_{h_{n}}$ (right) for different bandwidth values. We set $n = 750$, $I = [-0.5, 0,5]$, and $x_{j} = -0.5 + (j-1) \times 0.1$, $j = 1,\hdots, 11$. \label{fig:A2}}   
  \end{center}
\end{figure}

\begin{figure}[H]
  \begin{center}
     \includegraphics[clip, width=4cm]{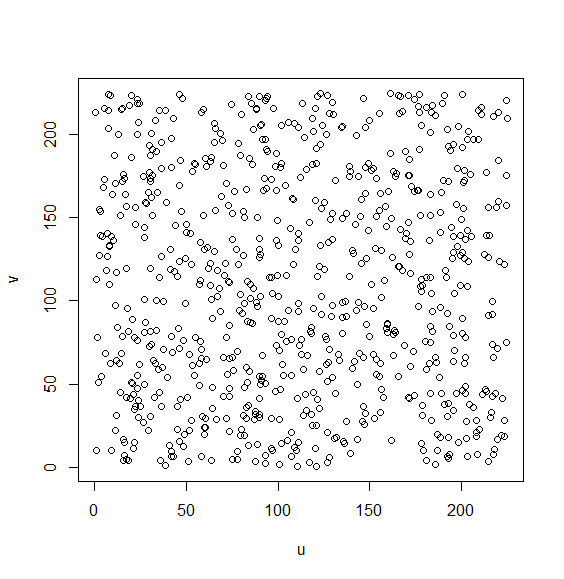}
        \caption{Simulated observation points 
sampled from the lattice $(u_{j},v_{k})$ with $u_{j} = 0.3 + 0.3(j-1)$ and $v_{k} = 0.6 + 0.3(k-1)$, $j,k = 1,\hdots, 750$. }
    \label{fig:A3}
  \end{center}
\end{figure}

Figure \ref{fig:A3} shows a simulated irregularly spaced observation points. Figures \ref{fig:A4} and \ref{fig:A5} show normalized empirical distributions of $\hat{\mu}_{b_{n}}(x)$ and $\hat{\sigma}_{h_{n}}^{2}(x)$ with $b_{n} = h_{n} = 0.5$
at $x = -0.25$(left), $x = 0$(center), and $x = 0.25$(right). The red line is the density of a standard normal distribution. The number of Monte Carlo iteration is 250 for each case. From these figures we can find the central limit theorems for each design point (Theorems \ref{P2} and \ref{P3}) hold true. 

Figure \ref{fig:A6} shows $85\%$(dark gray), $95\%$(gray), and $99\%$(light gray) confidence bands of $\mu$(left) and $\sigma^{2}$(right) on $I = [-0.5, 0.5]$. We set $x_{j} = -0.5 + (j-1)\times 0.1$, $j = 1,\hdots, 11$ as design points. 

We also investigated finite sample properties of $\widehat{\mu}_{b_{n}}$ and $\widehat{\mu}^{\ast}_{b_{n}}$ for different sample sizes. 
Figure \ref{fig:R1} depicts five realizations of the loss functions $\max_{1 \leq j \leq J}|\hat{\mu}_{b_{n}}(x_{j}) - \mu(x_{j})|$ (black line) and $\max_{1 \leq j \leq J}|\hat{\mu}^{\ast}_{b_{n}}(x_{j}) - \mu(x_{j})|$(red line) with different bandwidth values. We set $n = 750$(left), $1000$(center) and $1250$(right), and $x_{j} = -0.5 + (j-1) \times 0.1$, $j = 1,\hdots, 11$. We can find that red lines tend to be located below black lines. This implies that $\widehat{\mu}^{\ast}_{b_{n}}$ tends to have small bias compared with $\hat{\mu}_{b_{n}}$ as the sample size increases.

\begin{figure}[H]
  \begin{center}
    \begin{tabular}{cc}

      \begin{minipage}{0.33\hsize}
        \begin{center}
          \includegraphics[clip, width=3.8cm]{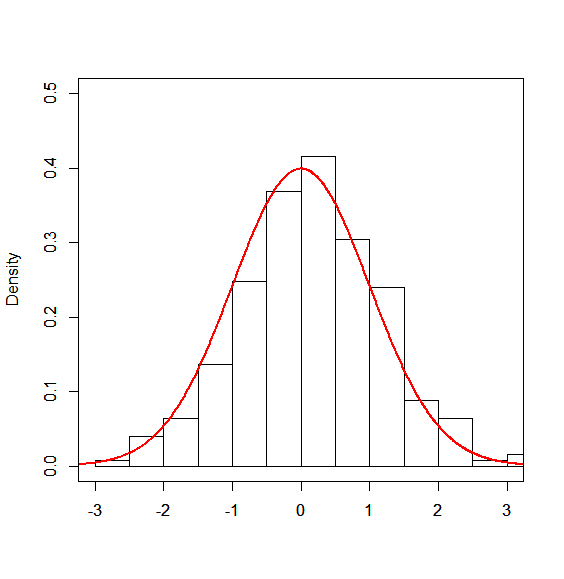}
        \end{center}
      \end{minipage}

      \begin{minipage}{0.33\hsize}
        \begin{center}
          \includegraphics[clip, width=3.8cm]{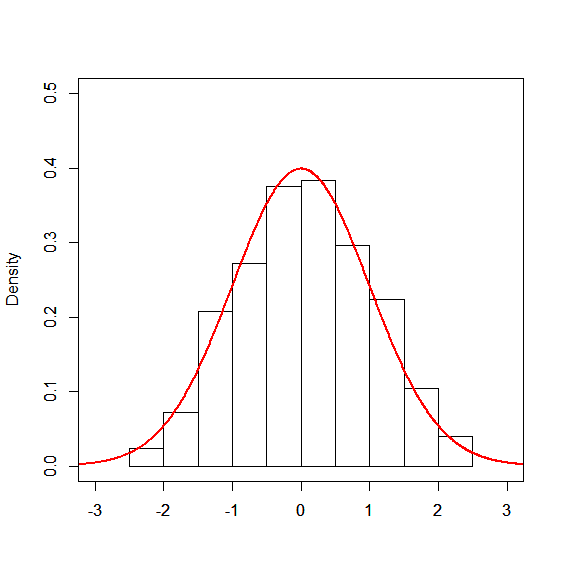}
        \end{center}
      \end{minipage}

      \begin{minipage}{0.33\hsize}
        \begin{center}
          \includegraphics[clip, width=3.8cm]{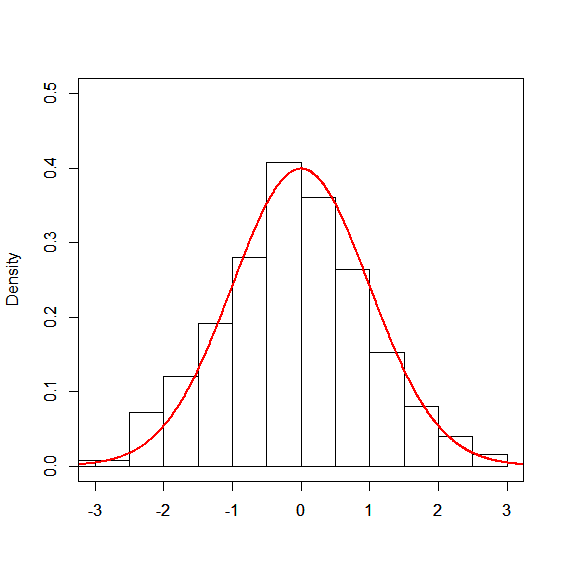}
        \end{center}
      \end{minipage}

    \end{tabular}
    \caption{Normalized empirical distributions of estimates $\hat{\mu}_{b_{n}}(x)$ at $x = -0.25$(left), $x=0$(center) and $x = 0.25$(right). The red line is the density of the standard normal distribution. We set $n = 750$ and the number of Monte Carlo iteration is 250.\label{fig:A4}}
  \end{center}
\end{figure}

\begin{figure}[H]
  \begin{center}
    \begin{tabular}{cc}

      \begin{minipage}{0.33\hsize}
        \begin{center}
          \includegraphics[clip, width=3.8cm]{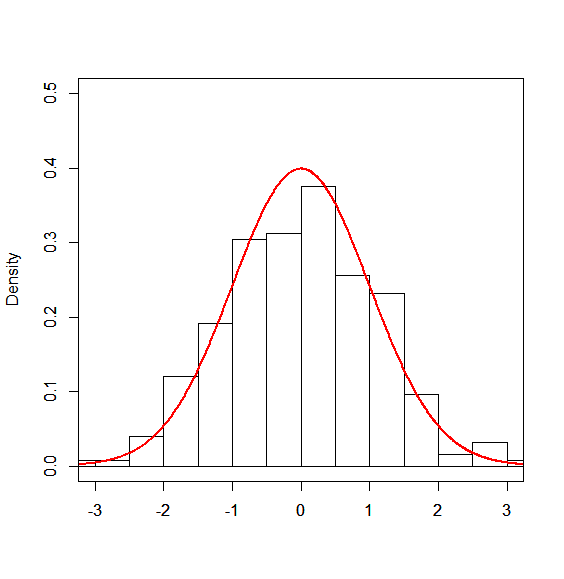}
        \end{center}
      \end{minipage}

      \begin{minipage}{0.33\hsize}
        \begin{center}
          \includegraphics[clip, width=3.8cm]{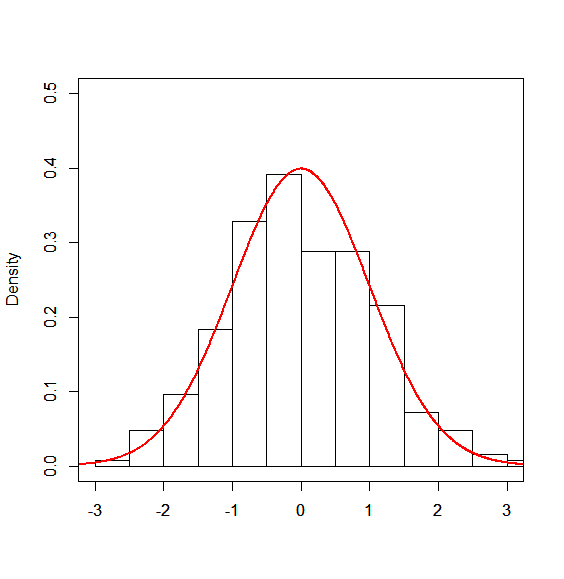}
        \end{center}
      \end{minipage}

      \begin{minipage}{0.33\hsize}
        \begin{center}
          \includegraphics[clip, width=3.8cm]{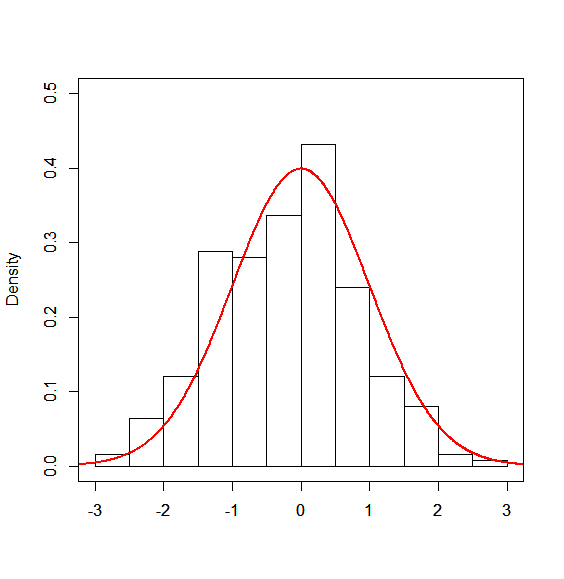}
        \end{center}
      \end{minipage}

    \end{tabular}
    \caption{Normalized empirical distributions of estimates $\hat{\sigma}^{2}_{h_{n}}(x)$ at $x = -0.25$(left), $x=0$(center) and $x = 0.25$(right). The red line is the density of the standard normal distribution. We set $n = 750$ and the number of Monte Carlo iteration is 250.\label{fig:A5}}
  \end{center}
\end{figure}

\begin{figure}[H]
  \begin{center}
    \begin{tabular}{cc}

      \begin{minipage}{0.5\hsize}
        \begin{center}
          \includegraphics[clip, width=4cm]{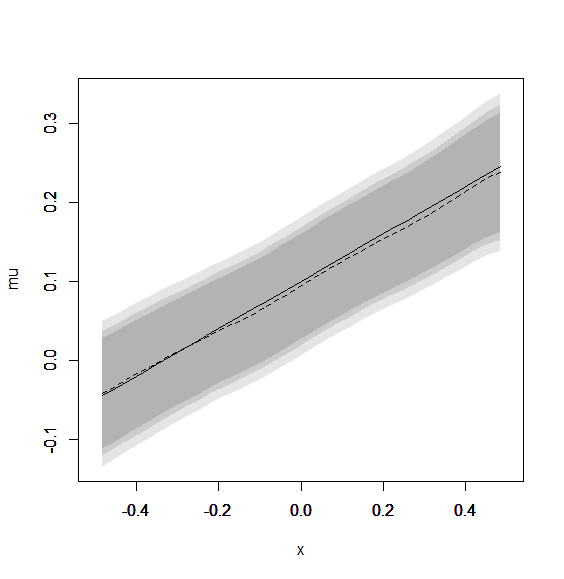}
        \end{center}
      \end{minipage}

      \begin{minipage}{0.5\hsize}
        \begin{center}
          \includegraphics[clip, width=4cm]{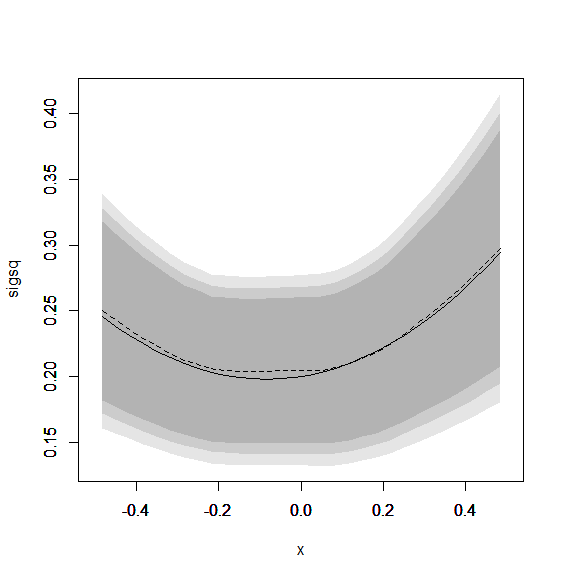}
        \end{center}
      \end{minipage}

    \end{tabular}
    \caption{ Estimates of $\mu$ (left) and $\sigma^{2}$ (right) together with $85\%$(dark gray), $95\%$(gray), and $99\%$(light gray) confidence bands. The solid lines correspond to the true functions. We set $n = 750$, $I = [-0.5,0.5]$, and $x_{j} = -0.5 + (j-1) \times 0.1$, $j = 1,\hdots, 11$. }
    \label{fig:A6}
  \end{center}
\end{figure}

\begin{figure}[H]
  \begin{center}
    \begin{tabular}{cc}

      \begin{minipage}{0.33\hsize}
        \begin{center}
          \includegraphics[clip, width=4cm]{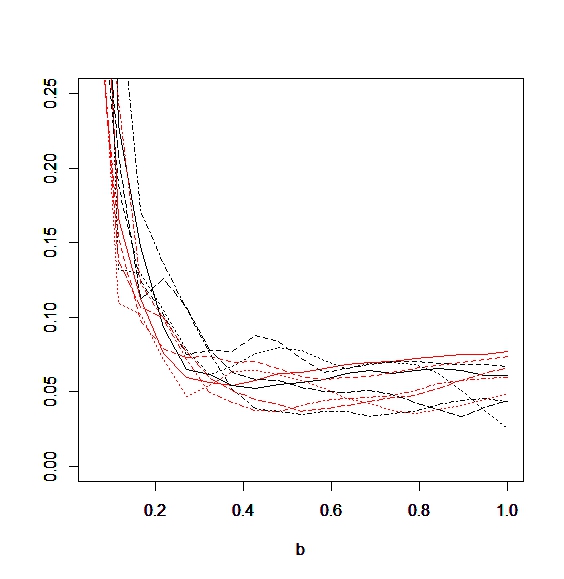}
        \end{center}
      \end{minipage}

      \begin{minipage}{0.33\hsize}
        \begin{center}
          \includegraphics[clip, width=4cm]{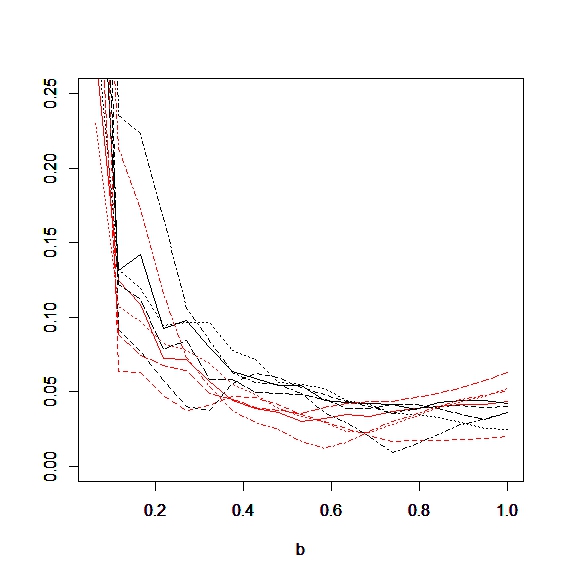}
        \end{center}
      \end{minipage}

      \begin{minipage}{0.33\hsize}
        \begin{center}
          \includegraphics[clip, width=4cm]{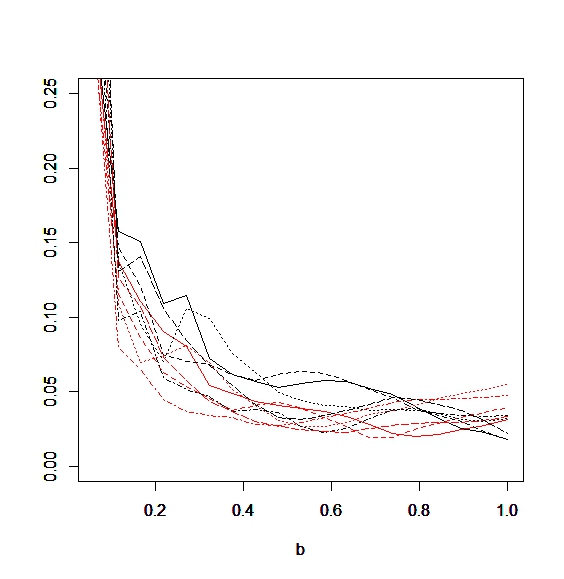}
        \end{center}
      \end{minipage}

    \end{tabular}
    \caption{Discrete $L^{\infty}$-distance between $\mu$ and estimates $\hat{\mu}_{b_{n}}$ (black line) and between $\mu$ and estimates $\hat{\mu}^{\ast}_{b_{n}}$ (red line) for different bandwidth values. We set $n = 750$(left), $1000$(center) and $1250$(right), $I = [-0.5, 0,5]$, and $x_{j} = -0.5 + (j-1) \times 0.1$, $j = 1,\hdots, 11$. \label{fig:R1}}
  \end{center}
\end{figure}
\section*{Acknowlegements}
This work is partially supported by Grant-in-Aid for Research Activity Start-up (18H05679) from the JSPS. 

\appendix

\section{Proofs}
\begin{proof}[Proof of Proposition \ref{P1}]
We already have pointwise central limit theorem of $\widehat{f}(x)$ by Theorem 1 in \cite{LuTj14}. Then a multivariate central limit theorem also holds true by Cram\'er-Wald device. Therefore, it suffices to show the asymptotic independence of estimators at different design points. For this, it is sufficient to show that covariances between estimators at different design points are asymptotically negligible. For $x_{1}<x_{2}$, we have that
\begin{align*}
nb_{n}\!\Cov(\widehat{f}(x_{1}), \widehat{f}(x_{2})) &\!=\! {1 \over nb_{n}}\sum_{j=1}^{n}\Cov(K((x_{1} - X(s_{j})/b_{n})), K((x_{2} - X(s_{j})/b_{n})))\\
&\quad + {2 \over nb_{n}}\!\sum_{i=1}^{n-1}\!\sum_{j = i+1}^{n-1}\!\!\Cov(K((x_{1} - X(s_{i})/b_{n})), K((x_{2} - X(s_{j})/b_{n}))) =: \!{1 \over nb_{n}}(V_{n,1} \!+\! V_{n,2}). 
\end{align*}
For $V_{n,1}$, by the assumption on $K$ and a change of variables, we have that 
\begin{align*}
\left|E\left[{1 \over hb_{n}}V_{n,1}\right]\right| &\leq \int K(z)K\left(z + {x_{2} - x_{1} \over b_{n}}\right)f(x_{1} - b_{n}z)dz + b_{n}\left(\int K(z)dz\right)^{2} = O(b_{n}) \to 0 
\end{align*}
as $n \to \infty$. We can also show that $(nb_{n})^{-1}V_{n,2} \to 0$ as $n \to \infty$ by the same argument of the proof of Theorem 1 in \cite{LuTj14}. Therefore, we complete the proof. 
\end{proof}

\begin{proof}[Proof of Proposition \ref{CLT}]
Let $i = \sqrt{-1}$. For the proof of the central limit theorem, it is sufficient to show the following conditions by Lemma 2 in \cite{Bo82}. 
\begin{itemize}
\item[(a)] $\sup_{n \geq 1}E[U_{n}^{2}]<\infty$ and (b) $\lim_{n \to \infty}E[(i\lambda - U_{n})e^{i\lambda U_{n}}] = 0$ for any $\lambda  \in \mathbb{R}$. 
\end{itemize}
The condition (a) immediately follows from the definition of $U_{n}$, since by a change of variables and the dominated convergence theorem, we have that 
\begin{align*}
nh_{n}\Var(U_{n}) &= {1 \over nh_{n}}\sum_{j=1}^{n}E\left[G^{2}(X(s_{j}))H^{2}(V(s_{j}))K^{2}\left((x - X(s_{j})) /h_{n}\right)\right] \\
&= E[H^{2}(V(s_{1}))]\int G^{2}(x - h_{n}z)K^{2}(z)f(x - h_{n}z)dz \to f(x)G^{2}(x)V(H)\|K\|_{L^{2}}^{2}\ \text{as}\ n \to \infty. 
\end{align*}
To show the condition (b), we use some technique in \cite{LuTj14}. 

(Step1): Define
\begin{align*}
Z_{n,j} = {1 \over nh_{n}}G(X(s_{j}))H(V(s_{j}))K\left({x- X(s_{j}) \over h_{n}}\right),
\end{align*} 
$U'_{n,j} = \sum_{\ell \in J_{n,j}(1)}Z_{n,\ell}$ where $J_{n,j}(p) := \{1 \leq \ell \leq n : d(\{s_{j}\}, \{s_{\ell}\}) \leq pc_{n}\}$, $a_{n} = \sum_{j=1}^{n}E[Z_{n,j}U'_{n,j}] = \Var(U_{n})(1 + o(1)) = O(1/nh_{n})$, and $U_{n,j} = a_{n}^{-1/2}U'_{n,j}$. 
We first give a lemma used in the proof of the condition (b). 
\begin{lemma}\label{Lem1}
We have that 
\begin{align*}
E[Z_{n,j}^{2}Z_{n,\ell}^{2}] &= 
\begin{cases}
O((n^{4}h_{n}^{3})^{-1}) & \text{if $j = \ell$}\\
O((n^{4}h_{n}^{2})^{-1}) & \text{if $j \neq \ell$} 
\end{cases},\\ 
\left|\Cov(Z_{n,j}Z_{n,\ell}, Z_{n,j'}Z_{n,\ell'})\right| &=
\begin{cases}
O\left({h^{2/(2 + \kappa)} \over n^{4}h_{n}^{4}}\right)\varphi^{\kappa/(2 + \kappa)}(d(\{s_{j}\}, \{s_{j'}\})) & \text{if $j = \ell$, $j' = \ell'$} \\
0 & \text{if $j \neq \ell$ or $j' \neq \ell'$} 
\end{cases}. 
\end{align*}
\end{lemma}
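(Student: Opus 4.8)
The plan is to exploit the independence of $\{X(s)\}$ and $\{V(s)\}$ (Assumption (A4)(iv)) to split every expectation into a product of an $X$-factor and a $V$-factor, and then to treat the two factors by different tools: the $V$-factor is handled by the i.i.d.\ structure of $\{V(s_{j})\}$ and the mean-zero condition $E[H(V(s_{1}))]=0$, while the $X$-factor is handled by changes of variable together with the uniform boundedness of the densities $f, f_{j,k}, \dots$ and, for the covariances, by a Davydov-type covariance inequality under the mixing condition (A1). Throughout I write $g_{n}(s) = (nh_{n})^{-1}G(X(s))K((x-X(s))/h_{n})$ so that $Z_{n,j} = g_{n}(s_{j})H(V(s_{j}))$.

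For the moment bounds I would argue as follows. When $j=\ell$, independence gives $E[Z_{n,j}^{4}] = E[H^{4}(V(s_{1}))]\,E[g_{n}(s_{j})^{4}]$; the fourth moment of $H$ is finite since $E[|H(V(s_{1}))|^{4+2\kappa}]<\infty$, and a change of variable $u = x - h_{n}z$ together with continuity (hence local boundedness) of $G$ near $x$, boundedness of $K$, and boundedness of $f$ gives $E[g_{n}(s_{j})^{4}] = O(h_{n}/(nh_{n})^{4}) = O((n^{4}h_{n}^{3})^{-1})$. When $j\neq\ell$, independence and the i.i.d.\ property give $E[Z_{n,j}^{2}Z_{n,\ell}^{2}] = (E[H^{2}(V(s_{1}))])^{2}\,E[g_{n}(s_{j})^{2}g_{n}(s_{\ell})^{2}]$, and the same change of variable applied to the bivariate density $f_{j,\ell}$ (bounded uniformly in $j,\ell$ by (A4)(ii)) produces the double factor $h_{n}^{2}$, i.e.\ $O((n^{4}h_{n}^{2})^{-1})$.

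For the covariance bounds the key observation is that the $V$-factor of $E[Z_{n,j}Z_{n,\ell}Z_{n,j'}Z_{n,\ell'}]$ factorizes over the distinct values among $j,\ell,j',\ell'$, and any index occurring exactly once contributes a factor $E[H(V(s_{1}))]=0$. In particular $E[Z_{n,j}Z_{n,\ell}]=0$ whenever $j\neq\ell$, so if $j\neq\ell$ or $j'\neq\ell'$ (and the two index pairs $\{j,\ell\}$, $\{j',\ell'\}$ do not coincide) both the cross term and the product of means vanish, giving covariance $0$. In the remaining case $j=\ell$, $j'=\ell'$ the $V$-factors are $V(H)$ and $V(H)^{2}$, and the covariance reduces to $V(H)^{2}\Cov(g_{n}(s_{j})^{2},g_{n}(s_{j'})^{2})$. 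I would bound the latter by Davydov's inequality with exponents $p=q=2+\kappa$ (so that the resulting mixing exponent is $1-2/(2+\kappa)=\kappa/(2+\kappa)$), using $\alpha(\mathcal{B}(\{s_{j}\}),\mathcal{B}(\{s_{j'}\}))\le\psi(1,1)\varphi(d(\{s_{j}\},\{s_{j'}\}))$ from (A1). The required norm $\|g_{n}(s_{j})^{2}\|_{2+\kappa}$ is $O(h_{n}^{1/(2+\kappa)}/(nh_{n})^{2})$ by the same change-of-variable estimate, and multiplying the two such norms yields the stated rate $O(h_{n}^{2/(2+\kappa)}/(n^{4}h_{n}^{4}))\varphi^{\kappa/(2+\kappa)}(d(\{s_{j}\},\{s_{j'}\}))$.

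The main obstacle I anticipate is the covariance estimate: one must choose the Davydov exponents so that the resulting power of $\varphi$ is exactly $\kappa/(2+\kappa)$ --- which is precisely what makes the summability condition (A1)(ii) usable later --- while simultaneously tracking the powers of $h_{n}$ and $nh_{n}$ through the $L^{2+\kappa}$ norm of $g_{n}(s_{j})^{2}$. The moment bounds are routine changes of variable, and the bookkeeping of the pairing structure of the four $V$-indices is the only delicate point in showing the off-diagonal covariances vanish.
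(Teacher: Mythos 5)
Your proof is correct and follows essentially the same route as the paper's: the moment bounds come from the independence of the $X$- and $V$-fields plus a change of variables (with the uniform boundedness of $f_{j,\ell}$ supplying the extra factor of $h_{n}$ when $j \neq \ell$), and the covariance bound comes from a Davydov-type inequality with exponents $p = q = 2+\kappa$ — exactly the paper's appeal to Proposition 2.5 of \cite{FaYa03} — yielding the same norms $O\bigl(h_{n}^{1/(2+\kappa)}/(nh_{n})^{2}\bigr)$ and hence the stated rate. Two of your choices differ from the paper and are worth noting, as both are small improvements in rigor. First, by factoring out the $V$-part \emph{before} invoking the covariance inequality, you apply mixing only to $\Cov(g_{n}(s_{j})^{2}, g_{n}(s_{j'})^{2})$, i.e.\ to the $X$-field alone, for which Assumption (A1) is literally stated; the paper instead applies the inequality directly to $Z_{n,j}^{2}$ and $Z_{n,j'}^{2}$, which are functions of the joint field $(X,V)$, and thus implicitly uses the (true, but unstated) fact that the joint field inherits the mixing bound because $V$ is i.i.d.\ and independent of $X$. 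Your factorization also needs only $E[H^{2}(V(s_{1}))]<\infty$ rather than $E[|H(V(s_{1}))|^{4+2\kappa}]<\infty$ in this step, since the $(2+\kappa)$-norms are taken of $X$-functionals only. Second, you actually prove the vanishing-covariance case — which the paper's proof omits entirely — and you correctly flag the necessary caveat: the covariance is zero when $j\neq\ell$ or $j'\neq\ell'$ only provided the unordered pairs $\{j,\ell\}$ and $\{j',\ell'\}$ do not coincide (otherwise it equals $E[Z_{n,j}^{2}Z_{n,\ell}^{2}]$, which is positive in general). This is harmless where the lemma is used, since in the bound for $A_{n,12}$ the two pairs are separated by a distance of at least $c_{n}$ and are therefore disjoint, but it is a genuine implicit hypothesis of the lemma as stated, and your proposal is the more precise formulation.
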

\begin{proof}[Proof of Lemma \ref{Lem1}]
We show the case when $j = \ell$ in the first result. The other case can be shown in the same way. If $j = \ell$, as $n \to \infty$, we have that
\begin{align*}
n^{4}h_{n}^{3}E[Z_{n,j}^{2}Z_{n,\ell}^{2}] &= {E[H^{4}(V(s_{1}))] \over h_{n}}\int G^{2}(y)K^{2}\left((x - y) / h_{n}\right)f(y)dy\\
&= E[H^{4}(V(s_{1}))]\int G^{2}(x - h_{n}z)K^{2}(z)f(x - h_{n}z)dz \to E[H^{4}(V(s_{1}))]f(x)G^{2}(x)\|K\|_{L^{2}}^{2}. 
\end{align*}
Now we show the second result. If $j = \ell$ and $j' = \ell'$, by Proposition 2.5 in \cite{FaYa03}, we have that 
\begin{align*}
\left|\Cov(Z_{n,j}Z_{n,\ell}, Z_{n,j'}Z_{n,\ell'})\right| &\leq \|Z_{n,j}^{2}\|_{L^{2 + \kappa}}\|Z_{n,j'}^{2}\|_{L^{2 +\kappa}}\alpha^{\kappa/(2 +\kappa)}(d(\{s_{j}\}, \{s_{j'}\}))\\
&\lesssim \|Z_{n,j}^{2}\|_{L^{2 + \kappa}}^{2}\varphi^{\kappa/(2 + \kappa)}(d(\{s_{j}\}, \{s_{j'}\})) \lesssim {h^{2/(2 + \kappa)} \over n^{4}h_{n}^{4}}\varphi^{\kappa/(2 + \kappa)}(d(\{s_{j}\}, \{s_{j'}\})). 
\end{align*}
Therefore, we complete the proof. 
\end{proof}
We decompose $E[(i\lambda - U_{n})e^{i\lambda U_{n}}]$ as follows. $E[(i\lambda - U_{n})e^{i\lambda U_{n}}] = A_{n,1} - A_{n,2} - A_{n,3}$
where 
\begin{align*}
A_{n,1} &= E\left(i\lambda e^{i\lambda U_{n}}\left(1 - {1 \over a_{n}}\sum_{j=1}^{n}Z_{n,j}U'_{n,j}\right)\right),\ A_{n,2} = E\left({1 \over \sqrt{a_{n}}}e^{i\lambda U_{n}}\sum_{j=1}^{n}Z_{n,j}(1 - i\lambda U_{n,j} - e^{-i\lambda U_{n,j}})\right), \\
A_{n,3} &= E\left({1 \over \sqrt{a_{n}}}\sum_{j=1}^{n}Z_{n,j}e^{i\lambda(U_{n} - U_{n,j})}\right).
\end{align*}

(Step 2): In this step, we show that $|A_{n,1}| \to 0$ as $n \to \infty$. 

Observe that $|A_{n,1}| \leq a_{n}^{-1}\left(\sum_{j=1}^{n}E\left(a_{n} - \sum_{j=1}^{n}Z_{n,j}U'_{n,j}\right)^{2}\right)^{1/2}$. 
We have that
\begin{align*}
E\left(a_{n} - \sum_{j=1}^{n}Z_{n,j}U'_{n,j}\right)^{2} &\!\!\!= E\left(\sum_{j=1}^{n}(Z_{n,j}U'_{n,j} - E[Z_{n,j}U'_{n,j}])\right)^{2}\\
&\!\!\!= \sum_{j=1}^{n}\sum_{k \in J_{n,j}(3)}\!\!\!\!\!\! \Cov(Z_{n,j}U'_{n,j}, Z_{n,k}U'_{n,k}) + \sum_{j=1}^{n}\sum_{k \in J_{n,j}^{c}(3)}\!\!\!\!\!\! \Cov(Z_{n,j}U'_{n,j}, Z_{n,k}U'_{n,k}) \\
&= : A_{n,11} + A_{n,12}. 
\end{align*}
By Lemma \ref{Lem1}, since 
\begin{align*}
|\Cov(Z_{n,j}U'_{n,j}, Z_{n,k}U'_{n,k})| & \leq E[Z_{n,j}^{2}|U'_{n,j}|^{2}] = \sum_{\ell \in J_{n,j}(1)}E[Z_{n,j}^{2}Z_{n,\ell}^{2}] + 2\sum_{ \ell < \ell', \ell, \ell' \in J_{n,j}(1)}E[Z_{n,j}^{2}Z_{n,\ell}Z_{n,\ell'}]\\
&\lesssim \left({c_{n} \over \delta_{n}}\right)^{2}\left({1 \over n^{4}h_{n}^{3}} +\left({c_{n} \over \delta_{n}}\right)^{2}{1 \over n^{4}h_{n}^{2}} \right) \lesssim \left({c_{n} \over \delta_{n}}\right)^{2}{1 \over n^{4}h_{n}^{3}}, 
\end{align*}
we have that $|A_{n,11}| \lesssim \left(c_{n}/\delta_{n}\right)^{4}(nh_{n})^{-3} \to 0$ as $n \to \infty$. Moreover, by Lemma \ref{Lem1}, we have that 
\begin{align*}
|A_{n,12}| &\leq \sum_{j=1}^{n}\sum_{j' \in J_{n,j}^{c}(3)}\left|\Cov(Z_{n,j}U'_{n,j}, Z_{n,j'}U'_{n,j'})\right| \leq \sum_{j=1}^{n}\sum_{j' \in J_{n,j}^{c}(3)}\sum_{\ell \in J_{n,j}(1)}\sum_{\ell' \in J_{n,j'}(1)}\!\!\!\!\!\left|\Cov(Z_{n,j}Z_{n,\ell}, Z_{n,j'}Z_{n,\ell'})\right|\\
&\lesssim \sum_{t =c_{n}}^{\infty}\sum_{j=1}^{n}\sum_{t \leq d(\{s_{j}\}, \{s_{j'}\}) \leq t+1}\left({c_{n} \over \delta_{n}}\right)^{4}{h^{2/(2 + \kappa)} \over n^{4}h_{n}^{4}}\varphi^{\kappa/(2 + \kappa)}(t) \lesssim \left({c_{n} \over \delta_{n}}\right)^{4}{h^{2/(2 + \kappa)} \over n^{3}h_{n}^{4}}\sum_{t = c_{n}}^{\infty}\left({t \over \delta_{n}^{2}}\right)\varphi^{\kappa/(2 + \kappa)}(t)\\
&\lesssim {h^{2/(2 + \kappa)} \over \delta_{n}^{6}n^{3}h_{n}^{4}}c_{n}^{4}\sum_{j=c_{n}}^{\infty}t\varphi^{\kappa/(2 + \kappa)}(t) = {1 \over n^{3}h_{n}^{(6 + \kappa)/(2 + \kappa)}}c_{n}^{4 + 3\gamma}\sum_{t = c_{n}}^{\infty}t\varphi^{\kappa/(2 + \kappa)}(t) \to 0\ \text{as}\ n \to \infty. 
\end{align*}

(Step 3): In this step, we show that $|A_{n,2}| \to 0$ as $n \to \infty$. Since $|1 - i\lambda U_{n,j} - e^{-i\lambda U_{n,j}}| \leq a_{n}^{-1}\lambda^{2}|U'_{n,j}|^{2}$,
we have that
\begin{align*}
|A_{n,2}| &\lesssim a_{n}^{-3/2}\sum_{j=1}^{n}E[|Z_{n,j}||U'_{n,j}|^{2}] \leq a_{n}^{-3/2}\sum_{j=1}^{n}\left(E[Z_{n,j}^{2}|U'_{n,j}|^{2}]\right)^{1/2}\left(E[|U'_{n,j}|^{2}]\right)^{1/2}\\
&\leq a_{n}^{-3/2}\left(\sum_{j=1}^{n}E[Z_{n,j}^{2}|U'_{n,j}|^{2}]\right)^{1/2}\left(\sum_{j=1}^{n}E[|U'_{n,j}|^{2}]\right)^{1/2}. 
\end{align*}
From a similar argument in (Step1), we have that $\sum_{j=1}^{n}E[Z_{n,j}^{2}|U'_{n,j}|^{2}] = O((nh_{n})^{-3})$. We also have that 
\begin{align*}
\sum_{j=1}^{n}E[|U'_{n,j}|^{2}] &\leq \sum_{j=1}^{n}E\left(\left|\sum_{\ell \in J_{n,j}(1)}Z_{n,j}\right|^{2}\right) \leq \sum_{j=1}^{n}\sum_{\ell \in J_{n,j}}\!\!\!E[Z_{n,\ell}^{2}] + \sum_{j=1}^{n}\sum_{\ell \neq \ell', \ell, \ell' \in J_{n,j}(1)}\!\!\!\!\!\!\!\!\!\!\!\!E[Z_{n,\ell}Z_{n,\ell'}] \lesssim \left({c_{n} \over \delta_{n}}\right)^{2}{1 \over nh_{n}}. 
\end{align*}
Therefore, we have that $|A_{n,2}| \lesssim a_{n}^{-3/2}(nh_{n})^{-3/2}\left({c_{n} \over \delta_{n}}\right){1 \over \sqrt{nh_{n}}} \lesssim \left({c_{n} \over \delta_{n}}\right){1 \over \sqrt{nh_{n}}} \to 0\ \text{as}\ n \to \infty$. 

(Step 4): In this step, we show that $|A_{n,3}| \to 0$ as $n \to \infty$. Since $E[Z_{n,j}] = 0$, by Theorem 5.1 in \cite{RoIo87}, we have that $|E[Z_{n,j}e^{i\lambda(U_{n} - U_{n,j})}]| \lesssim E[Z_{n,j}^{2}]^{1/2}(\psi(1,n)\varphi(c_{n}))^{1/2}$. 
We can also show that $\sum_{j=1}^{n}E[Z_{n,j}^{2}] = O(1/nh_{n})$. Then we have that 
\begin{align*}
|A_{n,3}| &\leq a_{n}^{-1/2}\sum_{j=1}^{n}|E[Z_{n,j}e^{i\lambda(U_{n} - U_{n,j})}]| \lesssim a_{n}^{-1/2}(\psi(1,n)\varphi(c_{n}))^{1/2}\sum_{j=1}^{n}E[Z_{n,j}^{2}]^{1/2}\\
&\lesssim {(\psi(1,n)\varphi(c_{n}))^{1/2} \over \sqrt{a_{n}}}n^{1/2}\left(\sum_{j=1}^{n}E[Z_{n,j}^{2}]\right)^{1/2} \lesssim (n\psi(1,n)\varphi(c_{n}))^{1/2} \to 0\ \text{as}\ n \to \infty. 
\end{align*}
\end{proof}

\begin{proof}[Proof of Theorem \ref{P2}]
We prove Theorem \ref{P2} in two steps. 

(Step1): In this step, we compute asymptotic bias and variance of the estimator. Note that we can replace $\widehat{f}$ by $f$ in the definition of $\widehat{\mu}_{b_{n}}$ since we already have the consistency of $\widehat{f}$ from Proposition \ref{P1} and define  $\widetilde{\mu}_{b_{n}}$ as $\widehat{\mu}_{b_{n}}$ with replacement of $\widehat{f}$ by $f$. We can decompose $\widetilde{\mu}_{b_{n}}(x) - \mu(x)$ as follows. 
\begin{align*}
\widetilde{\mu}_{b_{n}}(x) - \mu(x) &= {1 \over nb_{n}f(x)}\sum_{j=1}^{n}\!\sigma(X(s_{j}))V(s_{j})K\!\!\left(\!{x - X(s_{j}) \over b_{n}}\!\right) + {1 \over nb_{n}f(x)}\sum_{j=1}^{n}\!\mu(X(s_{j}))K\!\!\left(\!{x - X(s_{j}) \over b_{n}}\!\right) - \mu(x) \\
&=: V_{n}(\mu) + B_{n}(\mu). 
\end{align*}
For $B_{n}(\mu)$, by a change of variables and the dominating convergence theorem, we have that $b_{n}^{-2}E[B_{n}(\mu)] \to 0$ as $n \to \infty$. 
For $V_{n}(\mu)$, by a change of variables and the dominating convergence theorem, we have that $n b_{n}\Var(V_{n}(\mu)) \to \sigma^{2}(x)f(x)^{-1}\|K\|_{L^{2}} $as $n \to \infty$.

(Step2): The central limit theorem follows from Proposition \ref{CLT} with $G(x) = \sigma(x)$ and $H(x) = x$. The asymptotic negligibility of correlation between different design points can be shown by almost the same argument as the proof of Proposition \ref{P1}. Therefore, we obtain the desired result.  
\end{proof}

\begin{proof}[Proof of Theorem \ref{P3}]
We prove Theorem \ref{P3} in two steps. 

(Step1): In this step, we compute asymptotic bias and variance of the estimator. Note that as in the proof of Proposition \ref{P2}, we can replace $\widetilde{f}$ by $f$ in the definition of $\widehat{\sigma}_{h_{n}}^{2}$. Define  $\widetilde{\sigma}_{h_{n}}^{2}$ as $\widehat{\sigma}_{h_{n}}^{2}$ with replacement of $\widetilde{f}$ by $f$. We can decompose $\widetilde{\sigma}_{h_{n}}^{2}(x)$ as follows. 
\begin{align*}
\widetilde{\sigma}_{h_{n}}^{2}(x) &= {1 \over nh_{n}f(x)}\sum_{j=1}^{n}\{Y(s_{j}) - \mu(X(s_{j}))\}^{2}K\left({x - X(s_{j}) \over h_{n}}\right)\\
&\quad + {2 \over nh_{n}f(x)}\sum_{j=1}^{n}\left\{Y(s_{j}) - \mu(X(s_{j}))\right\}\left\{\widehat{\mu}_{b_{n}}^{\ast}(X(s_{j})) - \mu(X(s_{j}))\right\}K\left({x - X(s_{j}) \over h_{n}}\right)\\
&\quad + {1 \over nh_{n}f(x)}\sum_{j=1}^{n}\left\{\widehat{\mu}_{b_{n}}^{\ast}(X(s_{j})) - \mu(X(s_{j}))\right\}^{2}K\left({x - X(s_{j}) \over h_{n}}\right). 
\end{align*}
Since $\widehat{\mu}_{b_{n}}^{\ast}(y) - \mu(y) = O(b_{n}^{4}) + O_{P}(1/\sqrt{nb_{n}})$ for fixed $x \in \mathbb{R}$ and $y \in \{z: |z-x| \lesssim b_{n}\}$, and $b_{n} \sim h_{n}$ we have that 
\begin{align*}
\widetilde{\sigma}_{h_{n}}^{2}(x) &= {1 \over nh_{n}f(x)}\sum_{j=1}^{n}(Y(s_{j}) - \mu(X(s_{j})))^{2}K\left({x - X(s_{j}) \over h_{n}}\right) + o_{P}(1/\sqrt{nh_{n}})\\
&= {1 \over nh_{n}f(x)}\sum_{j=1}^{n}\sigma^{2}(X(s_{j}))V^{2}(s_{j})K\left({x - X(s_{j}) \over h_{n}}\right) + o_{P}(1/\sqrt{nh_{n}}).
\end{align*}
Then we can decompose $\widetilde{\sigma}_{h_{n}}^{2}(x) - \sigma^{2}(x)$ as follows. 
\begin{align*}
\widetilde{\sigma}_{h_{n}}^{2}(x) - \sigma^{2}(x) &= {1 \over nh_{n}f(x)}\sum_{j=1}^{n}\sigma^{2}(X(s_{j}))\left(V^{2}(s_{j})-1\right)K\left({x - X(s_{j}) \over h_{n}}\right)\\
&\quad + {1 \over nh_{n}f(x)}\sum_{j=1}^{n}\sigma^{2}(X(s_{j}))K\left({x - X(s_{j}) \over h_{n}}\right) - \sigma^{2}(x) + o_{P}(1/\sqrt{nh_{n}})\\
&=: V_{n}(\sigma) + B_{n}(\sigma) + o_{P}(1/\sqrt{nh_{n}}). 
\end{align*}
For $B_{n}(\sigma)$, by a change of variables and the dominating convergence theorem, we have that $h_{n}^{-2}E[B_{n}(\sigma)] \to 0$ as $n \to \infty$. 
For $V_{n}(\sigma)$, by a change of variables and the dominating convergence theorem, we have that $n h_{n}\Var(V_{n}(\sigma)) \to V_{4}\sigma^{4}(x)f(x)^{-1}\|K\|_{L^{2}}$ as $n \to \infty$. 

(Step2): The central limit theorem follows from Proposition \ref{CLT} with $G(x) = \sigma^{2}(x)$ and $H(x) = x^{2}-1$. The asymptotic negligibility of correlation between different design points can be shown by almost the same argument as the proof of Proposition \ref{P1}. Therefore, we obtain the desired result.  
\end{proof}

\end{document}